\documentclass[journal]{IEEEtran}
%

%

\usepackage[numbers,sort&compress]{natbib}

\usepackage{algorithm}
\usepackage{algorithmic}

\usepackage{pgf}
\usepackage{tikz}
\usetikzlibrary{arrows, decorations.pathmorphing, backgrounds, positioning, fit, petri, automata}
\definecolor{yellow1}{rgb}{1,0.8,0.2}

\usepackage{makecell}

\usepackage{authblk}
\usepackage[T1]{fontenc}
\usepackage{inputenc}

\usepackage{amsmath,amsthm,amsfonts,amssymb,amsbsy,amsopn,amstext}
\usepackage{graphicx,color}
\usepackage{mathbbol,mathrsfs}
\usepackage{float,ccaption}
\usepackage{indentfirst}
\usepackage{subfigure}

\usepackage{booktabs}
\usepackage{threeparttable}
\usepackage{multirow}
\usepackage{diagbox}

%
\ifCLASSINFOpdf
\else
\fi
\hyphenation{op-tical net-works semi-conduc-tor}

\newtheorem{thm}{Theorem}

\newtheorem{lem}{Lemma}

\newtheorem{defn}{Definition}

\newtheorem{ass}{Assumption}
\newcommand{\define}{:=}

\newcommand{\X}{\overline{c}}


\begin{document}
%

\title{A Variance-Reduced Aggregation Based Gradient Tracking method for Distributed Optimization over Directed Networks}
%
%
%

\author{Shengchao Zhao,~\IEEEmembership{}
	Siyuan Song,~\IEEEmembership{} 
	Yongchao Liu
	\thanks{Shengchao Zhao and Yongchao Liu are with the School of Mathematical Sciences, Dalian University of Technology, Dalian 116024, China (e-mail: zhaoshengchao@mail.dlut.edu.cn; lyc@dlut.edu.cn).  
		
		Siyuan Song is with the
		School of Science and Engineering,
		Chinese University of Hong Kong, Shenzhen, 518172, China (e-mail: ssiyuan73@gmail.com).   }}
\maketitle

\begin{abstract}

This paper studies the distributed optimization problem over directed networks with noisy  information-sharing. To resolve the imperfect communication  issue over directed networks, a series of  noise-robust variants  of Push-Pull/AB method have been developed. These methods improve the robustness of Push-Pull method against the information-sharing noise through adding small factors on weight matrices and replacing  the global gradient tracking with the cumulative gradient tracking. 
Based on the two techniques, we propose a new variant of the Push-Pull method  by presenting a novel mechanism of inter-agent information aggregation, named variance-reduced aggregation (VRA). VRA helps us to release some conditions on the objective function and networks. When the objective function is convex  and the sharing-information noise is variance-unbounded, it can be shown that the proposed method converges to the optimal solution almost surely. When the objective function is strongly convex  and the sharing-information noise is variance-bounded,  the proposed method achieves the convergence rate of {\footnotesize$\mathcal{O}\left(k^{-(1-\epsilon)}\right)$} in the mean square sense, where $\epsilon$ could be close to 0 infinitely. Simulated experiments on ridge regression problems verify the effectiveness of the proposed method.

\end{abstract}


%
\IEEEpeerreviewmaketitle

\section{Introduction}

\IEEEPARstart{I}{n} this paper, we consider the following distributed optimization problem
\begin{equation}\label{model}
\min_{x\in\mathbb{R}^d} f(x)\define\frac{1}{n}\sum_{j=1}^n f_j(x)
\end{equation}
over the directed networks composed by $n$ agents, where $f_j(x)$ is the local objective
function private to agent $j\in\{1,2,\cdots,n\}$. Distributed optimization problem has gained a growing interest over the past decades, motivated by various applications, ranging from large-scale machine learning \cite{boyd2011}, sensor networks \cite{Rabbat2004sensor}, parameter estimation \cite{Towfic2015}, to name a few. 
Existing algorithms for distributed optimization problems build upon local neighbor communication of different information, such as the information on the decision variable \cite{Nedic2009Dg,ram2010distributed,Johansson2008Sub},  dual variable \cite{Wei2012dadmm,LEI2016pd} and  gradient \cite{qu2017harnessing,Nedic2017achive,Xu2015Aug}.

Recently, the distributed optimization algorithms
robust to the information-sharing noise have been well studied due to the prevalence of the information-sharing noise from noisy channel \cite{Dasarathan2015robust,Kar2009imc}, commutation compression \cite{Rabbat2004sensor,Koloskova2019spare} or privacy preserving \cite{Wang2022Tailoring,Wang2017DPL}.
 To suppress the information-sharing noise, a simple but efficient approach is to add a small factor on the coupling weights, which
has been incorporated into different distributed optimization algorithms over undirected or  directed balanced networks \cite{Sri2011async,Lei2018,Zhang2019Sign,Doan2021Quanti}. Usually, a constant but small enough factor results in  biased estimates in steady-state \cite{Sri2011async}, and a decreasing factor ensures the almost sure convergence to the optimal solution \cite{Lei2018,Zhang2019Sign,Doan2021Quanti}.

To resolve the imperfect communication  issue over general directed networks, a series of  noise-robust variants  of Push-Pull/AB method \cite{Xin2018linear,pu2018push} have been developed  \cite{Wang2022Tailoring,pu2020robust,wangGT2022,Chen2022Priv,wangGT2022}. These methods combat the information-sharing noise through two techniques, i.e. adding small factors on weight matrices and replacing  the global gradient tracking with the cumulative gradient tracking. Wang and Nedic \cite{Wang2022Tailoring} redesign the conventional Push-Pull method by adding decaying factors on weight matrices for differentially-private distributed optimization. When the level set and gradient of the objective function are bounded, the method  proposed in \cite{Wang2022Tailoring}  can efficiently suppress  the differential privacy noise and acquire the almost sure convergence to the optimal solution. Noting that the dynamic-consensus
mechanism of gradient tracking would result in the information-sharing noise accumulation and thus affects the accuracy of distributed optimization,
\cite{pu2020robust} suggests to track the cumulative gradient instead of the  global gradient. By combining the technique of cumulative gradient tracking with the technique of adding  small factors on  weight matrices, \cite{pu2020robust} proposes the
Robust Push-Pull method (R-Push-Pull), which is able to suppress  the general information-sharing noise and  converges to the neighborhood of the optimal solution linearly.  Chen et. al \cite{Chen2022Priv} propose a modified gradient tracking method by combining the two techniques with a state decomposition mechanism for differentially-private distributed optimization. Similar to R-Push-Pull method, the method proposed  in \cite{Chen2022Priv} is subject to steady-state errors as the factors adding on weight matrices need to be constant. Recently,  Wang and Ba\c{s}ar \cite{wangGT2022} propose a new robust gradient-tracking based method by reconstructing the cumulative gradient tracking to accommodate the decaying factors. When the communication network induced by row weight matrix is strongly connected, the authors show that the obtained solution  converges to the optimum almost surely.

Note that the methods  proposed in \cite{Wang2022Tailoring,wangGT2022} 
require more conditions on the objective function or networks to ensure the almost sure convergence to the optimum than the conventional Push-Pull method \cite{pu2018push}, such as the boundedness of the level set and gradient of the objective function \cite{Wang2022Tailoring}, the strong connectivity of the communication network induced by row weight matrix \cite{wangGT2022}. This motivates us to ask the following question:
\begin{center}
\emph{Is there a noise-robust variant of the Push-Pull method that can achieve almost sure convergence to the optimum with releasing these conditions?}
\end{center}
In this paper, we give an affirmative answer with a variance-reduced aggregation (VRA) based gradient-tracking method. Compared with \cite{Wang2022Tailoring,wangGT2022}, the new method is designed by incorporating a novel mechanism of inter-agent information aggregation, i.e. VRA, into the cumulative gradient tracking iteration, where VRA helps us to release the conditions on the objective function and networks by reducing the  variance/bias of gradient tracking in a state-of-the-art variance-reduction fashion.
As far as we are concerned, the contributions of the paper can be summarized as follows.
\begin{itemize}
	
	\item [$\bullet$] We propose a variance-reduced aggregation based gradient tracking method for distributed optimization problem over general directed networks with noisy  information-sharing. Developed by incorporating VRA into the cumulative gradient tracking, the proposed method achieves the almost sure convergence to the optimum when the objective function is convex  and the sharing-information noise is variance-unbounded. Compared with \cite{Wang2022Tailoring},  the proposed method does not require the level set and the gradients are bounded. Compared with \cite{wangGT2022}, the proposed method does not require that the eigenvector of the row weight matrix is known or  estimated iteratively, and that the communication network induced by row weight matrix has to be strongly connected. Moreover,
    VRA is an efficient and practical mechanism to combat the information sharing noise and combining it with other distributed optimization algorithms may be of independent interest.

	\item [$\bullet$] When the objective function is strongly convex  and the information-sharing noise is variance-bounded,  all agents’ estimates converge to the same optimal solution with the convergence rate $\mathcal{O}\left(k^{-(1-\epsilon)}\right)$ in the mean square sense, where $\epsilon$ could be close to 0 infinitely. To the best of our knowledge, this is the first convergence rate result of arriving at the optimal solution, and it is a complement to the convergence rate result of arriving in the optimal solution's neighborhood \cite{Sri2011async,pu2020robust,Chen2022Priv}. We verify our theoretical results with the numerical example of ridge regression problem.
	
\end{itemize}

The rest of this paper is organized as follows. Section \ref{sec:mod-ass} presents notation and preliminary conditions on distributed optimization 
problem. Section \ref{sec:VRA} introduces the variance-reduced aggregation based gradient tracking method (VRA-GT) and its motivation. Section \ref{sec:convergence} establishes the almost sure convergence and the convergence rate in mean square sense of VRA-GT.  At last, numerical results are presented in Section \ref{sec:num-exm} to verify the effectiveness of VRA-GT.

\section{Notation and preliminary conditions}\label{sec:mod-ass}

Most of the notation and preliminary conditions refer to \cite{wangGT2022,pu2020robust}.
Denotes $\mathbb{R}^d$  as the d-dimension Euclidean space endowed with norm $\|x\|=\sqrt{\langle x,x\rangle}$ and vectors default to columns if not
otherwise specified.  $\mathbf{1}\in \mathbb{R}^{n}$, $\mathbf{0}\in \mathbb{R}^{n\times d}$ and
$\mathbf{I}\in\mathbb{R}^{n\times n}$ represent the vector of ones, the matrix of zeros and the identity matrix respectively. For any two positive sequences $\{a_k\}$ and $\{b_k\}$,  $a_k=\mathcal{O}(b_k)$ if there exists $c>0$ such that $a_k\le c b_k$. 

Consider $n$ nodes interacting over a  directed graph $\mathcal{G}=(\mathcal{V},\mathcal{E})$, where $\mathcal{V}=\{1,2,\cdots,n\}$ is the set of vertices and $\mathcal{E}\subseteq \mathcal{V}\times \mathcal{V}$ is a collection of
ordered pairs $(i, j)$ such that node $j$ can send information to node $i$. A directed graph is said to be strongly connected if there exists a directed path between any two nodes. For a nonnegative weight matrix $\mathbf{W}=\{w_{ij}\}\in\mathbb{R}^n$, we define the induced directed graph as $\mathcal{G}_{\mathbf{W}}=(\mathcal{V},\mathcal{E}_{\mathbf{W}})$ where $(i, j)\in \mathcal{E}_{\mathbf{W}}$ if and only if $w_{ij}>0$, and  denote $\mathcal{N}_{\mathbf{W},i}^{\text{in}}$ as the collection  of in-neighbors of agent $i$. Similarly, denote $\mathcal{N}_{\mathbf{W},i}^{\text{out}}$ as the set of out-neighbors of agent $i$.  We assume that $i\notin \mathcal{N}_{\mathbf{W},i}^{\text{in}}$ and $i\notin \mathcal{N}_{\mathbf{W},i}^{\text{out}}$. 

\begin{defn}\label{def:norm}
Given an arbitrary inner product $\langle x,y \rangle_W\define\langle\hat{\mathbf{W}}x,\hat{\mathbf{W}}y \rangle$ and its induced vector norm $\|x\|_W\define\|\hat{\mathbf{W}}x\|$ on $\mathbb{R}^{n}$, for any $\mathbf{x},\mathbf{y}\in \mathbb{R}^{n\times d}$, 
\begin{align*}
&\langle\mathbf{x},\mathbf{y} \rangle_W=\langle\mathbf{x}^{(1)},\mathbf{y}^{(1)} \rangle_W+\langle\mathbf{x}^{(2)},\mathbf{y}^{(2)} \rangle_W+\cdots+\langle\mathbf{x}^{(d)},\mathbf{y}^{(d)} \rangle_W
\end{align*}
and $\|\mathbf{x}\|_W=\sqrt{\langle\mathbf{x},\mathbf{x} \rangle_W}$, where  $\hat{\mathbf{W}}\in\mathbb{R}^{n\times n}$ is an invertible matrix, $\mathbf{x}^{(i)}$ and $\mathbf{y}^{(i)}$ are the $i$-th column of matrix $\mathbf{x}$ and $\mathbf{y}$ respectively.
\end{defn}

The following assumptions on the objective functions, weight matrices and their induced networks are needed.
\begin{ass}[\textbf{objective function}]\label{ass:function}
	For any $i\in\mathcal{V}$, $f_i(x)$ is convex and has $L$-Lipschitz continuous
	gradients, i.e., for any $x,y \in \mathbb{R}^d$,
	\begin{align*}
	&f_i(y)\ge f_i(x)+\left\langle \nabla f_i(x),y-x\right\rangle,\\
	&\|\nabla f_i(x)-\nabla f_i(y)\|\le  L\|x-y\|.
	\end{align*}
\end{ass}

\begin{ass}  [\textbf{weight matrices and networks}]\label{ass:matrix}
	Let $\mathcal{G}_{\mathbf{R}}=\left(\mathcal{V},\mathcal{E}_\mathbf{R}\right)$ and $\mathcal{G}_{\mathbf{C}^\intercal}=\left(\mathcal{V},\mathcal{E}_{\mathbf{C}^\intercal}\right)$ be graphs induced by  matrices $\mathbf{R}$ and $\mathbf{C}^\intercal$ respectively.  Suppose that  
	\begin{itemize}
		\item [(i)]The matrix $\mathbf{R}\in\mathbb{R}^{n\times n}$ is nonnegative row stochastic and $\mathbf{C}\in\mathbb{R}^{n\times n}$ is nonnegative column stochastic, i.e., $\mathbf{R}\mathbf{1}=\mathbf{1}$ and $\mathbf{1}^\intercal\mathbf{C}=\mathbf{1}^\intercal$. In addition, the diagonal entries of $\mathbf{R}$ and $\mathbf{C}$ are positive.
		\item[(ii)]  The graphs $\mathcal{G}_\mathbf{R}$ and $\mathcal{G}_{\mathbf{C}^\intercal}$ each contain at least one spanning tree. Moreover, there exists at least one node that is a root of spanning trees for both $\mathcal{G}_\mathbf{R}$ and $\mathcal{G}_{\mathbf{C}^\intercal}$, i.e. $\mathcal{R}_\mathbf{R}\cap \mathcal{R}_{\mathbf{C}^\intercal}\ne\emptyset$, where $\mathcal{R}_\mathbf{R}$ ( $\mathcal{R}_{\mathbf{C}^\intercal}$) is the set of roots of all possible spanning trees in the graph $\mathcal{G}_\mathbf{R}$ ( $\mathcal{G}_{\mathbf{C}^\intercal}$).
	\end{itemize}
\end{ass}

Assumption \ref{ass:function} is the standard condition on the objective functions, Assumption \ref{ass:matrix} is weaker than requiring that both $\mathcal{G}_\mathbf{R}$ and $\mathcal{G}_\mathbf{C}$ are strongly connected \cite{pu2018push}. 
 Under Assumption \ref{ass:matrix}, the matrix $\mathbf{R}$ has a nonnegative left eigenvector $u$ (w.r.t. eigenvalue 1) with $u^\intercal\mathbf{1}=n$, and matrix $\mathbf{C}$ has a nonnegative left eigenvector $v$ (w.r.t. eigenvalue 1) with $v^\intercal\mathbf{1}=n$. Moreover, $u^\intercal v>0$.

\section{A variance-reduced aggregation based gradient tracking method}\label{sec:VRA}
In this section, we present a variance-reduced aggregation based gradient tracking method (VRA-GT) for problem (\ref{model}) with noisy information-sharing, which is described in Algorithm \ref{alg:VRA-GT}.    

\begin{algorithm}[h]
	\caption{\underline{V}ariance-\underline{R}educed \underline{A}ggregation  based \underline{G}radient \underline{T}racking (VRA-GT)}\label{alg:VRA-GT}
	\begin{algorithmic}[1]
			\REQUIRE  initial values $x_{i,1},s_{i,1}=z_{i,1}\in\mathbb{R}^{d}$ for any $i\in\mathcal{V}$; step-size $\alpha_k$, positive factors $\beta_k,\gamma,\eta_k\in(0,1]$; nonnegative weight matrices $\mathbf{R}$ and $\mathbf{C}$.
		\FOR {$k=1,2,\cdots$}
		\FOR {$i=1,\cdots,n$ in parallel}
			\STATE\label{alg:s}
			$s_{i,k+1}=(1-\gamma)s_{i,k}+\gamma z_{i,k}+\nabla f_i(x_{i,k}).$
		\STATE\label{alg:x}
		 agent $i$ pulls $ x_{j,k}+\xi_{j,k}$ from each $j\in \mathcal{N}_{\mathbf{R},i}^{\text{in}}$ and updates
		{\footnotesize\begin{align*}
			&x_{i,k+1}=(1-\beta_k)x_{i,k}+\beta_k\left(\sum_{j=1}^n R_{ij}x_{j,k}+\sum_{j\in\mathcal{N}_{\mathbf{R},i}^{\text{in}}} R_{ij}\xi_{j,k}\right)\\
			&\quad\quad\quad\quad-\alpha_k \left(s_{i,k+1}-s_{i,k}\right),\notag
			\end{align*}}
		where $\xi_{j,k}$ is the information-sharing noise.
		\STATE\label{alg:z}
		agent $i$ receives $C_{ij}  \frac{s_{j,k+1}-(1-\eta_k)s_{j,k}}{\eta_k}+\zeta_{j,k+1}$ pushed  from each $j\in \mathcal{N}_{\mathbf{C},i}^{\text{in}}$, and updates
		{\footnotesize\begin{align*}
			z_{i,k+1}&=\eta_k\left(\sum_{j=1}^n C_{ij}  \frac{s_{j,k+1}-(1-\eta_k)s_{j,k}}{\eta_k}+\sum_{j\in\mathcal{N}_{\mathbf{C},i}^{\text{in}}}\zeta_{j,k+1}\right)\\
			&\quad+(1-\eta_k)z_{i,k}.
			\end{align*}}
		where $\zeta_{j,k+1}$ is the information-sharing noise.
		\ENDFOR
		\ENDFOR
	\end{algorithmic}
\end{algorithm}

In Algorithm \ref{alg:VRA-GT}, $s_{i,k}$ is the tracker of cumulative gradient $\sum_{t=1}^k\sum_{j=1}^n\frac{1}{n}\nabla f_j(x)$, $x_{i,k}$ is the estimation of decision variable $x$ and $z_{i,k}$ is the tracker of  inter-agent information aggregation  $\sum_{j=1}^n C_{ij}s_{j,k}$.  The step \ref{alg:z} is named   variance-reduced aggregation (VRA), which could suppress the information-sharing noise $\zeta_{j,k}$ arising from 'push' step. It is easy to verify that Algorithm \ref{alg:VRA-GT} covers R-Push-Pull method \cite{pu2020robust} with $\eta_k=1,\beta_k=\beta$.

Denoting 
\begin{align*}
&\mathbf{s}_k= [s_{1,k},s_{2,k},\cdots,s_{n,k}]^\intercal,\\
&\mathbf{x}_k= [x_{1,k},x_{2,k},\cdots,x_{n,k}]^\intercal,\\
&\mathbf{z}_k= [z_{1,k},z_{2,k},\cdots,z_{n,k}]^\intercal,\\
&\mathbf{g}_k=\left[\nabla f_1(x_{1,k}),\nabla f_2(x_{2,k}),\cdots,\nabla f_n(x_{n,k})\right]^\intercal,\\
&\zeta_k^C=\left[\sum_{j\in\mathcal{N}_{\mathbf{C},i}^{\text{in}}}\zeta_{j,k},\sum_{j\in\mathcal{N}_{\mathbf{C},2}^{\text{in}}}\zeta_{j,k},\cdots,\sum_{j\in\mathcal{N}_{\mathbf{C},n}^{\text{in}}}\zeta_{j,k}\right]^\intercal,\\
&\xi_k^R= \left[\sum_{j\in\mathcal{N}_{\mathbf{R},1}^{\text{in}}} R_{1j}\xi_{j,k},\sum_{j\in\mathcal{N}_{\mathbf{R},2}^{\text{in}}} R_{2j}\xi_{j,k},\cdots,\sum_{j\in\mathcal{N}_{\mathbf{R},n}^{\text{in}}} R_{nj}\xi_{j,k}\right]^\intercal,\\
&\mathbf{C}_{\gamma}=(1-\gamma)\mathbf{I}+\gamma\mathbf{C},\quad\quad\quad\quad\mathbf{R}_{k}=(1-\beta_k)\mathbf{I}+\beta_k\mathbf{R},
\end{align*}
 VRA-GT can be rewritten as a similar formula of the conventional Push-Pull scheme \cite{pu2018push}:
\begin{align}
&\mathbf{x}_{k+1}=\mathbf{R}_{k}\mathbf{x}_k+\beta_k\xi_k^R-\alpha_k\mathbf{y}_{k},\label{alg:x-1}\\
&\mathbf{y}_{k+1}=\mathbf{C}_{\gamma}\mathbf{y}_{k}+(\mathbf{g}_{k+1}+\gamma\phi_{k+1})-(\mathbf{g}_k+\gamma\phi_k),\label{alg:y}
\end{align}
where $\mathbf{y}_k=\mathbf{s}_{k+1}-\mathbf{s}_k$, $\phi_k=\mathbf{z}_k-\mathbf{C}\mathbf{s}_k$. The main difference between VRA-GT and conventional Push-Pull method is the presence of terms $\beta_k\xi_k^R$ and $\gamma\phi_k$, where $\xi_k^R$ is the information-sharing  noise at 'pull' step, $\phi_{k}$ is the  error of $z_{i,k}$ estimating the information-aggregation $\sum_{j=1}^n C_{ij}s_{j,k}$, $\beta_k$ and $\gamma$ are the factors to suppress $\xi_k^R$ and $\phi_{k}$. Note that Algorithm \ref{alg:VRA-GT} eliminates the influence of term $\beta_k\xi_k^R$ by employing decaying factor $\beta_k$ \cite{Wang2022Tailoring}, and 
eliminates the influence of term $\gamma\phi_k$ by making $\phi_k$  decay to zero
through the VRA mechanism. Then, Algorithm \ref{alg:VRA-GT} may have the convergence guarantee similar to the conventional Push-Pull method, which explains the reason why VRA-GT can release the conditions on the objective function and networks \cite{wangGT2022,Wang2022Tailoring}. To see the convergence of $\phi_k=\mathbf{z}_k-\mathbf{C}\mathbf{s}_k\longrightarrow \mathbf{0}$,  we reformulate VRA as 
\begin{align}\label{eq-z}
&\mathbf{z}_{k+1}\notag\\
&=(1-\eta_k)\left(\mathbf{z}_{k}+\mathbf{C}\mathbf{s}_{k+1}-\mathbf{C}\mathbf{s}_{k}\right)+\eta_k\left( \mathbf{C}\mathbf{s}_{k+1}+\zeta_{k+1}^C\right),
\end{align}
which is in the form of the hybrid variance reduced technique   and  has the theoretical guarantee of  $\mathbf{z}_{k+1}$ approximating $\mathbf{C}\mathbf{s}_{k+1}$ \cite{Ashok2019Momentum}. 
The following lemma rigorously shows the convergence of $\phi_k\longrightarrow \mathbf{0}$. 
\begin{thm}\label{thm:VRA-conv}
	Let  $\{\mathbf{s}_k\}$ be an arbitrary random variable sequence. Suppose that for any $i\in\mathcal{V}$, (a)  $\{\zeta_{i,k}\}$ is the collection of zero-mean independent random variables, (b) $\{\zeta_{i,k}, \mathbf{s}_{k}\}$ is independent for every $k$, (c) $\sum_{t=1}^\infty \eta_{t}=\infty$, $\sum_{t=1}^\infty \eta_{t}^2<\infty$, $\sum_{t=1}^\infty \eta_{t}^2 \mathbb{E}\left[\|\zeta_{i,t}\|^2\right]<\infty$. Then for the sequence $\{\mathbf{z}_k\}$ generated by (\ref{eq-z}),
	\begin{equation}\label{VRA-conv}
	\left\|\mathbf{z}_k-\mathbf{C}\mathbf{s}_k\right\|^2\longrightarrow \mathbf{0},\quad \sum_{t=1}^\infty \eta_{t} \left\|\mathbf{z}_k-\mathbf{C}\mathbf{s}_k\right\|^2<\infty
	\end{equation} 
	almost surely, where the matrix norm $\|\cdot\|$ on $\mathbb{R}^{n\times d}$ is defined by Definition \ref{def:norm} with $\hat{\mathbf{W}}=\mathbf{I}$. Moreover, if there exists a constant $\sigma$ such that 
	$$\mathbb{E}\left[\|\zeta_{i,t}\|^2\right]\le \sigma^2,~ \forall i\in\mathcal{V},~t\in \mathbb{N}$$ 
	 and 
	\begin{equation*}
	\eta_k=\left\{
	\begin{aligned}
	&\frac{\eta}{k^{a_1}},\quad\quad \eta\in(0,1),a_1\in (0.5,1),\\
	&\frac{\eta}{k+1},\quad \eta\in(1,2),
	\end{aligned}\right.
	\end{equation*}
    we have
	\begin{equation}\label{VRA-conv-1}
	\mathbb{E}\left[\left\|\mathbf{z}_k-\mathbf{C}\mathbf{s}_k\right\|^2\right]\le c_\eta\eta_k
	\end{equation}
	for some $c_\eta>0$.
\end{thm}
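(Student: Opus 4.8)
The plan is to collapse the VRA recursion into a one–step error recursion that no longer involves $\mathbf{s}_k$, and then apply a supermartingale convergence argument for the almost sure part and a deterministic-recursion induction for the rate.

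First I would introduce the error $\phi_k\define\mathbf{z}_k-\mathbf{C}\mathbf{s}_k$ and substitute the reformulation (\ref{eq-z}). Expanding $(1-\eta_k)\!\left(\mathbf{z}_k+\mathbf{C}\mathbf{s}_{k+1}-\mathbf{C}\mathbf{s}_k\right)+\eta_k\!\left(\mathbf{C}\mathbf{s}_{k+1}+\zeta_{k+1}^C\right)-\mathbf{C}\mathbf{s}_{k+1}$ and collecting terms, the $\mathbf{C}\mathbf{s}_{k+1}$ contributions cancel and one is left with the clean linear recursion
\begin{equation*}
\phi_{k+1}=(1-\eta_k)\phi_k+\eta_k\zeta_{k+1}^C .
\end{equation*}
The crucial observation is that this recursion decouples from $\{\mathbf{s}_k\}$ for $k\ge 2$: $\phi_k$ depends only on the initial error $\phi_1=(\mathbf{I}-\mathbf{C})\mathbf{s}_1$ and the noises $\zeta_2^C,\dots,\zeta_k^C$. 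Everything afterward is driven purely by the noise.

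For the almost sure claim (\ref{VRA-conv}) I would set $\mathcal{F}_k\define\sigma(\mathbf{s}_1,\zeta_2^C,\dots,\zeta_k^C)$, so that $\phi_k$ is $\mathcal{F}_k$-measurable while by (a)–(b) the vector $\zeta_{k+1}^C$ is zero-mean and independent of $\mathcal{F}_k$. Squaring the recursion, the cross term $2(1-\eta_k)\eta_k\langle\phi_k,\zeta_{k+1}^C\rangle$ vanishes in conditional expectation, giving
\begin{equation*}
\mathbb{E}\!\left[\|\phi_{k+1}\|^2\mid\mathcal{F}_k\right]=(1-\eta_k)^2\|\phi_k\|^2+\eta_k^2\,\mathbb{E}\!\left[\|\zeta_{k+1}^C\|^2\right].
\end{equation*}
Using $\eta_k\in(0,1]$ so that $(1-\eta_k)^2\le 1-\eta_k$, and bounding $\mathbb{E}[\|\zeta_{k+1}^C\|^2]\le n\sum_i\mathbb{E}[\|\zeta_{i,k+1}\|^2]$ (the inner cross terms vanishing by independence), condition (c) makes the residual term summable. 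The Robbins--Siegmund supermartingale convergence theorem then yields simultaneously that $\|\phi_k\|^2$ converges almost surely to a finite limit and that $\sum_t\eta_t\|\phi_t\|^2<\infty$ a.s. Since $\sum_t\eta_t=\infty$, a strictly positive limit would force $\sum_t\eta_t\|\phi_t\|^2=\infty$; hence the limit must be $0$, establishing (\ref{VRA-conv}).

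For the rate (\ref{VRA-conv-1}) I would take total expectations in the same identity. Writing $u_k\define\mathbb{E}[\|\phi_k\|^2]$ and $M\define n^2\sigma^2$ under the uniform variance bound, this gives the scalar recursion $u_{k+1}\le(1-\eta_k)u_k+M\eta_k^2$, and I would prove $u_k\le c_\eta\eta_k$ by induction. The inductive step reduces to checking $c_\eta(\eta_k-\eta_{k+1})\le(c_\eta-M)\eta_k^2$. For $\eta_k=\eta/k^{a_1}$ with $a_1\in(0.5,1)$ one has $\eta_k-\eta_{k+1}=\mathcal{O}(k^{-a_1-1})$, of strictly smaller order than $\eta_k^2=\Theta(k^{-2a_1})$ because $a_1<1$, so the inequality holds for all large $k$ once $c_\eta>M$; for $\eta_k=\eta/(k+1)$ both sides are $\Theta(k^{-2})$ and the step collapses to the constant condition $c_\eta(\eta-1)\ge M\eta$, solvable precisely because $\eta>1$. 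In both regimes I would enlarge $c_\eta$ to dominate the finitely many transient indices $k\le K_0$, closing the induction. I expect the main obstacle to be this rate induction: choosing a single $c_\eta$ that meets the asymptotic inequality in each step-size regime \emph{and} absorbs the base cases, and recognizing that the borderline regime $\eta_k=\Theta(1/k)$ is exactly what forces the requirement $\eta>1$. A secondary technical point is the independence bookkeeping guaranteeing the cross term vanishes, which rests on reading (a)–(b) as making the noise sequence independent of the relevant state history.
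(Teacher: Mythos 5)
Your proposal is correct and follows essentially the same route as the paper: the same collapsed error recursion $\phi_{k+1}=(1-\eta_k)\phi_k+\eta_k\zeta_{k+1}^C$, the same conditioning argument that kills the cross term, and the Robbins--Siegmund theorem combined with $\sum_t\eta_t=\infty$ for the almost sure claim. The only cosmetic difference is in the rate part, where you carry out by hand the induction on $u_{k+1}\le(1-\eta_k)u_k+M\eta_k^2$ to get $u_k\le c_\eta\eta_k$; this is precisely the content of the Polyak lemmas (Chapter 2, Lemmas 4 and 5) that the paper cites at that step, and your case analysis of the two step-size regimes (including why $\eta>1$ is needed in the $\eta/(k+1)$ case) is accurate.
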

\begin{proof}
	By (\ref{eq-z}),
	\begin{align*}
	&\left\|\mathbf{z}_{k+1}-\mathbf{C}\mathbf{s}_{k+1}\right\|^2\\
	&=\left\|(1-\eta_k)\left(\mathbf{z}_{k}-\mathbf{C}\mathbf{s}_{k}\right)+\eta_k\zeta_{k+1}^C\right\|^2\\
	&=(1-\eta_k)^2\left\|\mathbf{z}_{k}-\mathbf{C}\mathbf{s}_{k}\right\|^2+\eta_k^2\left\|\zeta_{k+1}^C\right\|^2\\
	&\quad+2\left\langle(1-\eta_k)\left(\mathbf{z}_{k}-\mathbf{C}\mathbf{s}_{k}\right),\eta_k\zeta_{k+1}^C\right\rangle.
	\end{align*}
	Define $\sigma$-algebra $\mathcal{F}_k^{'}=\sigma\{\mathbf{s}_1,\mathbf{z}_1, \zeta_2^C,\cdots,\zeta_{k}^C\}$. Noting that $\mathbb{E}\left[\zeta_{k+1}^C|\mathcal{F}_k^{'}\right]=0$, we have
	\begin{align}\label{vra-ie-1}
	&\mathbb{E}\left[\left\|\mathbf{z}_{k+1}-\mathbf{C}\mathbf{s}_{k+1}\right\|^2|\mathcal{F}_k^{'}\right]\notag\\
	&=(1-\eta_k)^2\left\|\mathbf{z}_{k}-\mathbf{C}\mathbf{s}_{k}\right\|^2+\eta_k^2\mathbb{E}\left[\left\|\zeta_k^C\right\|^2|\mathcal{F}_k^{'}\right]\notag\\
	&\le (1-\eta_k)^2\left\|\mathbf{z}_{k}-\mathbf{C}\mathbf{s}_{k}\right\|^2+\eta_k^2\sigma_{\xi,k}^2\notag\\
	&=(1+\eta_k^2-2\eta_k)\left\|\mathbf{z}_{k}-\mathbf{C}\mathbf{s}_{k}\right\|^2+\eta_k^2\sigma_{\xi,k}^2,
	\end{align}
	where $\sigma_{\xi,k}^2=n\sum_{j=1}^n\mathbb{E}\left[\left\|\zeta_{i,k}\right\|^2\right]$. Then by the Robbins-Siegmund Lemma (Lemma A.1 in Supplementary Materials Section A), $\left\|\mathbf{z}_{k}-\mathbf{C}\mathbf{s}_{k}\right\|^2$ converges to some random variable and 
	$$\sum_{t=1}^\infty \eta_{t} \left\|\mathbf{z}_{t}-\mathbf{C}\mathbf{s}_{t}\right\|^2<\infty$$
	 almost surely. Note that $\sum_{t=1}^\infty \eta_{t} =\infty$. The fact $\sum_{t=1}^\infty \eta_{t} \left\|\mathbf{z}_{t}-\mathbf{C}\mathbf{s}_{t}\right\|^2<\infty $ leads to $$\liminf_{k\rightarrow\infty} \left\|\mathbf{z}_{k+1}-\mathbf{C}\mathbf{s}_{k+1}\right\|^2=0.$$
	  The preceding relation combines the convergence of $\left\|\mathbf{z}_{k}-\mathbf{C}\mathbf{s}_{k}\right\|^2$ implies
	  $$\lim_{k\rightarrow\infty} \left\|\mathbf{z}_{k}-\mathbf{C}\mathbf{s}_{k}\right\|^2=0.$$
	  To summarize, (\ref{VRA-conv}) holds.
	
	If $\mathbb{E}\left[\|\zeta_{i,t}\|^2\right]\le \sigma^2$ and $\eta_k=\frac{\eta}{k^{a_1}}$ or $\eta_k=\frac{\eta}{k+1}$, by taking expectation on both sides of (\ref{vra-ie-1}), we have 
	\begin{align*}
	&\mathbb{E}\left[\left\|\mathbf{z}_{k+1}-\mathbf{C}\mathbf{s}_{k+1}\right\|^2\right]\\
	&\le(1-\eta_k)\mathbb{E}\left[\left\|\mathbf{z}_{k}-\mathbf{C}\mathbf{s}_{k}\right\|^2\right]+\eta_k^2n^2\sigma^2.
	\end{align*}
	 Applying \cite[Lemmas 4, 5 in Chapter 2]{polyak1987Introduction} on above inequality, we arrive at (\ref{VRA-conv-1}).
\end{proof}

Theorem \ref{thm:VRA-conv} establishes the almost sure convergence of VRA when the variance of the information-sharing noise is unbounded,  and provides its convergence rate when the information-sharing noise is variance-bounded. Theorem \ref{thm:VRA-conv} implies error $\phi_k$ may decay to zero in the almost sure and mean square senses, which is helpful for proving the convergence of Algorithm \ref{alg:VRA-GT}. On the other hand, the mechanism VRA is independent of specific distributed optimization algorithms as there are no assumptions for random variable $\mathbf{s}_{k}$ and  weight matrix $\mathbf{C}$, which provides the flexibility of combination VRA with different  distributed optimization  algorithms.

\section{Convergence Analysis of VRA-GT}\label{sec:convergence}
In this section, we study the convergence properties of VRA-GT method. The following lemma is a technical result,  which  provides some norms for studying the consensus of VRA-GT.
\begin{lem}[{\cite[Lemma 3]{Song2021CompressedGT}}]\label{lem:norm}
	Under Assumption \ref{ass:matrix},\\ (i) there exist invertible matrices $\hat{\mathbf{R}}$, $\hat{\mathbf{C}}\in\mathbb{R}^{n\times n}$ and the corresponding induced inner products
	\begin{equation*}
	\langle x, y\rangle_R\define\langle \hat{\mathbf{R}}x, \hat{\mathbf{R}}y\rangle,\quad \langle x, y\rangle_C\define\langle \hat{\mathbf{C}}x, \hat{\mathbf{C}}y\rangle
	\end{equation*}
	and vector norms
	\begin{equation*}
	\|x\|_R\define\left\|\hat{\mathbf{R}}x\right\|,\quad \|x\|_C\define\left\|\hat{\mathbf{C}}x\right\|,\quad \forall \mathbf{x}\in\mathbb{R}^{n};
	\end{equation*}
	(ii) let $\|\cdot\|_*$ and $\|\cdot\|_{**}$ be any two vector norms of $\|\cdot\|$, $\|\cdot\|_{\mathbf{R}}$ or $\|\cdot\|_{\mathbf{C}}$. There exists a constant $\X>1$ such that
	\begin{equation*}
	\|x\|_*\le \bar{c} \|x\|_{**},\quad \forall x\in\mathbb{R}^{n};
	\end{equation*}
	(iii) the corresponding matrix norms satisfy:
	\begin{equation*}
	\left\|\mathbf{C}\gamma-\frac{v\mathbf{1}^\intercal}{n}\right\|_C<1-\gamma\rho_C, \quad \left\|\mathbf{R}_k-\frac{\mathbf{1}u^\intercal}{n}\right\|_R\le 1-\beta_k\rho_R,
	\end{equation*}
	where $\rho_C,\rho_R$ are constants in $(0,1]$.
\end{lem}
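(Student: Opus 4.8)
The plan is to handle parts (i)--(iii) in turn, putting all the structural work into (i) so that (iii) becomes a one-line convex-combination estimate. Throughout write $\mathbf{P}_R\define\frac{\mathbf{1}u^\intercal}{n}$ and $\mathbf{P}_C\define\frac{v\mathbf{1}^\intercal}{n}$. First I would record the elementary algebra showing these are projections that commute with $\mathbf{R}$ and $\mathbf{C}$: the normalizations $u^\intercal\mathbf{1}=n$, $\mathbf{1}^\intercal v=n$ give $\mathbf{P}_R^2=\mathbf{P}_R$ and $\mathbf{P}_C^2=\mathbf{P}_C$, while $\mathbf{R}\mathbf{1}=\mathbf{1}$, $u^\intercal\mathbf{R}=u^\intercal$ and the column-stochastic analogues give $\mathbf{R}\mathbf{P}_R=\mathbf{P}_R\mathbf{R}=\mathbf{P}_R$ and $\mathbf{C}\mathbf{P}_C=\mathbf{P}_C\mathbf{C}=\mathbf{P}_C$. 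The key structural input is that, under Assumption \ref{ass:matrix}, the eigenvalue $1$ of $\mathbf{R}$ (resp.\ $\mathbf{C}$) is simple and is the \emph{only} eigenvalue of modulus one: the spanning-tree condition (ii) forces simplicity, and the positive diagonals in (i) force aperiodicity, excluding other unimodular eigenvalues. Hence $\mathbf{P}_R$ (resp.\ $\mathbf{P}_C$) is precisely the rank-one spectral projection onto the eigenvalue-$1$ eigenspace, and the residuals obey $\rho(\mathbf{R}-\mathbf{P}_R)<1$ and $\rho(\mathbf{C}-\mathbf{P}_C)<1$.

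For part (i) I would construct $\hat{\mathbf{R}}$ (and symmetrically $\hat{\mathbf{C}}$) from a basis that splits the Perron direction off from its invariant complement. Since $u^\intercal\mathbf{R}=u^\intercal$, the subspace $\mathcal{S}_R\define\ker(u^\intercal)$ is $\mathbf{R}$-invariant and complementary to $\mathrm{span}(\mathbf{1})$ (because $u^\intercal\mathbf{1}=n\neq 0$), so in an adapted basis $\mathbf{R}$ block-diagonalizes as $\mathrm{diag}(1,\mathbf{R}_{\mathcal{S}_R})$ with $\rho(\mathbf{R}_{\mathcal{S}_R})<1$, and $\mathbf{P}_R$ becomes the projection onto the first coordinate. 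On the contracting block I would invoke the classical fact that a matrix admits an induced vector norm whose operator norm is within any prescribed $\epsilon$ of its spectral radius; applying this to $\mathbf{R}_{\mathcal{S}_R}$ and reassembling the two blocks produces an invertible $\hat{\mathbf{R}}$ for which the Euclidean norm $\|x\|_R\define\|\hat{\mathbf{R}}x\|$ makes the projection non-expansive, $\|\mathbf{I}-\mathbf{P}_R\|_R=1$, while $\|\mathbf{R}-\mathbf{P}_R\|_R<1$. I then set $1-\rho_R\define\|\mathbf{R}-\mathbf{P}_R\|_R$ and choose $\rho_C$ just below $1-\|\mathbf{C}-\mathbf{P}_C\|_C$ (a hair of slack, to make the $\mathbf{C}$-bound strict later), giving constants $\rho_R,\rho_C\in(0,1]$ together with the norms in (i); here the matrix norms are the operator norms these induce, which for left-multiplication coincide on $\mathbb{R}^{n\times d}$ with those on $\mathbb{R}^{n}$ since the action is column-wise.

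Part (ii) is then immediate from the equivalence of norms on the finite-dimensional space $\mathbb{R}^{n}$: for each of the finitely many ordered pairs taken from $\{\|\cdot\|,\|\cdot\|_R,\|\cdot\|_C\}$ there is a comparison constant, and letting $\bar{c}$ be their maximum (enlarged so that $\bar{c}>1$) yields the claim. For part (iii) I would use the projection identities to write the exact convex-combination decompositions $\mathbf{R}_k-\mathbf{P}_R=(1-\beta_k)(\mathbf{I}-\mathbf{P}_R)+\beta_k(\mathbf{R}-\mathbf{P}_R)$ and $\mathbf{C}_\gamma-\mathbf{P}_C=(1-\gamma)(\mathbf{I}-\mathbf{P}_C)+\gamma(\mathbf{C}-\mathbf{P}_C)$; applying the triangle inequality in the induced matrix norms, using $\|\mathbf{I}-\mathbf{P}_R\|_R=\|\mathbf{I}-\mathbf{P}_C\|_C=1$ together with $\|\mathbf{R}-\mathbf{P}_R\|_R=1-\rho_R$ and $\|\mathbf{C}-\mathbf{P}_C\|_C<1-\rho_C$, then gives $\|\mathbf{R}_k-\mathbf{P}_R\|_R\le 1-\beta_k\rho_R$ and $\|\mathbf{C}_\gamma-\mathbf{P}_C\|_C<1-\gamma\rho_C$, as required.

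The step I expect to be the main obstacle is the norm construction in part (i): making $\mathbf{R}-\mathbf{P}_R$ a contraction in \emph{some} norm is routine, but part (iii) additionally needs the complementary projection $\mathbf{I}-\mathbf{P}_R$ to be non-expansive in the \emph{same} norm, and since $\mathbf{P}_R$ is an oblique (non-orthogonal) projection this cannot be arranged in the standard Euclidean geometry. Reconciling the two requirements is exactly what forces the adapted norm to be built from the basis that block-diagonalizes $\mathbf{R}$ along $\mathrm{span}(\mathbf{1})\oplus\ker(u^\intercal)$ \emph{before} the spectral-radius approximation is applied on the contracting block.
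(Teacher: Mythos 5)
The paper does not prove this lemma at all --- it is imported verbatim by citation from \cite{Song2021CompressedGT}, so there is no in-paper argument to compare against. Your proof is correct and is essentially the standard construction used in that reference and in the Push-Pull literature: the Ren--Beard fact that a spanning tree plus positive diagonals makes $1$ a simple eigenvalue and the only unimodular one, the block-splitting along $\mathrm{span}(\mathbf{1})\oplus\ker(u^\intercal)$ so that the oblique spectral projection becomes ``orthogonal'' in the adapted geometry (this is indeed the one non-routine point, and you identify and resolve it correctly), the $\rho(A)+\epsilon$ norm on the contracting block, and the exact convex-combination identity $\mathbf{R}_k-\mathbf{1}u^\intercal/n=(1-\beta_k)(\mathbf{I}-\mathbf{1}u^\intercal/n)+\beta_k(\mathbf{R}-\mathbf{1}u^\intercal/n)$ for part (iii). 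The only reading you silently (and correctly) impose is that $v$ must be the \emph{right} eigenvector of $\mathbf{C}$ ($\mathbf{C}v=v$) for $\mathbf{C}\,(v\mathbf{1}^\intercal/n)=v\mathbf{1}^\intercal/n$ to hold, despite the paper calling it a left eigenvector; that is the standard convention and is what the lemma requires.
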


For the convenience of convergence analysis,  we define an auxiliary sequence $\{\mathbf{y}_{k}^{'}\}$
\begin{equation*}
\mathbf{y}_{k+1}^{'}=\mathbf{C}_{\gamma}\mathbf{y}_{k}^{'}+\mathbf{g}_{k+1}-\mathbf{g}_{k}, \quad \mathbf{y}_{1}^{'}=\mathbf{g}_{1},
\end{equation*}
and two weighted averages $$\bar{x}_{k+1}\define\frac{u^\intercal}{n}\mathbf{x}_{k+1},~\bar{y}_{k+1}^{'}\define\frac{\mathbf{1}^\intercal}{n}\mathbf{y}_{k+1}^{'},$$ 
where  $\mathbf{y}_{k}^{'}$ stand for the trackers of global gradient based on local exact gradient information. The following lemma quantifies the upper bounds of consensus errors and optimality gap.
\begin{lem}\label{lem:couple}
	Suppose that (a) parameters $\beta_k,\gamma,\eta_k<1$, (b) Assumptions \ref{ass:function}-\ref{ass:matrix} hold. Then 
	\begin{align}\label{ie-y}
	&\mathbb{E}\left[\left\|\mathbf{y}_{k+1}^{'}-v\bar{y}_{k+1}^{'}\right\|_C^2\bigg|\mathcal{F}_k\right]\notag\\
	&\le\frac{1+\rho_\gamma^2}{2}\left\|\mathbf{y}_{k}^{'}-v \bar{y}_{k}^{'}\right\|_C^2+c_1\beta_{k}^2\|\mathbf{R}-\mathbf{I}\|^2\left\|\mathbf{x}_k-\mathbf{1}\bar{x}_k\right\|^2\notag\\
	&\quad +c_1\beta_k^2\mathbb{E}\left[\left\|\xi_k^R\right\|^2\right]+c_1\alpha_k^2\left\|\mathbf{y}_{k}\right\|^2,
	\end{align}
	\begin{align}\label{ie-x}
	&\mathbb{E}\left[\left\|\mathbf{x}_{k+1}-\mathbf{1}\bar{x}_{k+1}\right\|_R^2|\mathcal{F}_k\right]\notag\\
	&\le \left(1-\beta_k\rho_R\right)\left\|\mathbf{x}_{k}-\mathbf{1} \bar{x}_{k}\right\|_R^2+\frac{\alpha_k^2c_2}{\beta_k\rho_R}\left\|\mathbf{y}_{k}\right\|^2\notag\\
	&\quad+c_2\beta_k^2\mathbb{E}\left[\left\|\xi_k^R\right\|^2\right]~~~~~~~~~
	\end{align}
	and for any optimal solution $x^*$ of problem (\ref{model}),
	\begin{align}\label{ie-opt}
	&\mathbb{E}\left[\left\|\bar{x}_{k+1}-x^*\right\|^2|\mathcal{F}_k\right]\notag\\
	&\le \left(1+\frac{\alpha_k^2}{\beta_k}\right)\left\|\bar{x}_{k}-x^*\right\|^2+\frac{2\alpha_k^2\|u\|^2}{n^2}\left\|\mathbf{y}_{k}\right\|^2\notag\\
	&\quad+\frac{2\beta_k^2\|u\|^2}{n^2}\mathbb{E}\left[\left\|\xi_k^R\right\|^2\right]-2\alpha_k\frac{u^\intercal v}{n}\left(f(\bar{x}_k)- f(x^*)\right)\notag\\
	&\quad+\beta_k\left\|\frac{u^\intercal}{n}\mathbf{y}_{k}-\frac{u^\intercal v}{n}\nabla f(\bar{x}_k)^\intercal\right\|^2,
	\end{align}
	where $\mathcal{F}_k=\sigma\{\mathbf{x}_1,\mathbf{s}_1,\mathbf{z}_1, \zeta_2^C,\cdots,\zeta_{k}^C,\xi_1^R,\cdots,\xi_{k-1}^R\}$,
	\begin{align*}
	&\rho_\gamma=1-\gamma\rho_C,~c_1=3\frac{1+\rho_\gamma^2}{1-\rho_\gamma^2}\bar{c}^2c_v^2L^2,\\
	&c_2=\bar{c}^2\left\|\mathbf{I}-\frac{\mathbf{1}u^\intercal}{n}\right\|^2,~c_v=\left\|\mathbf{I}-\frac{v\mathbf{1}^\intercal}{n}\right\|,
	\end{align*} 
	 $\bar{c}$ is defined in Lemma \ref{lem:norm} (ii),  $\rho_C$ and $\rho_R$ are defined in Lemma \ref{lem:norm} (iii), the matrix norms $\|\cdot\|$, $\|\cdot\|_R$, $\|\cdot\|_C$ on $\mathbb{R}^{n\times d}$ are defined by Definition \ref{def:norm} with $\hat{\mathbf{W}}=\mathbf{I}$, $\hat{\mathbf{R}}$ and $\hat{\mathbf{C}}$ respectively.
\end{lem}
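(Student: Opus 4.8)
The plan is to derive all three bounds from the one-step recursions \eqref{alg:x-1}--\eqref{alg:y} by splitting each iterate into its consensus component and its disagreement component, invoking the contraction estimates of Lemma~\ref{lem:norm}(iii), and then closing the recursion with Young's inequality; the information-sharing noise is dealt with only at the end by conditioning on $\mathcal{F}_k$, under which $\mathbf{x}_k$, $\mathbf{y}_k$ and $\mathbf{y}_k'$ are measurable while $\xi_k^R$ is zero-mean and independent, so every linear noise cross term vanishes and only $\mathbb{E}[\|\xi_k^R\|^2]$ survives.

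For \eqref{ie-y} I would first use the identity $\frac{v\mathbf{1}^\intercal}{n}\mathbf{C}_{\gamma}=\frac{v\mathbf{1}^\intercal}{n}$ (which follows from $\mathbf{1}^\intercal\mathbf{C}=\mathbf{1}^\intercal$) to write $\mathbf{y}_{k+1}'-v\bar{y}_{k+1}'=(\mathbf{C}_{\gamma}-\frac{v\mathbf{1}^\intercal}{n})(\mathbf{y}_k'-v\bar{y}_k')+(\mathbf{I}-\frac{v\mathbf{1}^\intercal}{n})(\mathbf{g}_{k+1}-\mathbf{g}_k)$, the projection annihilating the consensus part of $\mathbf{y}_k'$. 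Taking $\|\cdot\|_C$, applying Lemma~\ref{lem:norm}(iii) to extract the factor $\rho_\gamma=1-\gamma\rho_C$, and squaring with the Young split that converts $\rho_\gamma^2$ into $\tfrac{1+\rho_\gamma^2}{2}$ and leaves weight $\tfrac{1+\rho_\gamma^2}{1-\rho_\gamma^2}$ on the remainder reduces matters to bounding $\|\mathbf{g}_{k+1}-\mathbf{g}_k\|^2$. Norm equivalence (the constants $\bar{c},c_v$), $L$-Lipschitzness, and the identity $\mathbf{x}_{k+1}-\mathbf{x}_k=\beta_k(\mathbf{R}-\mathbf{I})(\mathbf{x}_k-\mathbf{1}\bar{x}_k)+\beta_k\xi_k^R-\alpha_k\mathbf{y}_k$ together with $\|a+b+c\|^2\le3(\|a\|^2+\|b\|^2+\|c\|^2)$ then give exactly the three terms and the constant $c_1$. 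Inequality \eqref{ie-x} is handled identically: $\frac{\mathbf{1}u^\intercal}{n}\mathbf{R}_k=\frac{\mathbf{1}u^\intercal}{n}$ yields $\mathbf{x}_{k+1}-\mathbf{1}\bar{x}_{k+1}=(\mathbf{R}_k-\frac{\mathbf{1}u^\intercal}{n})(\mathbf{x}_k-\mathbf{1}\bar{x}_k)+(\mathbf{I}-\frac{\mathbf{1}u^\intercal}{n})(\beta_k\xi_k^R-\alpha_k\mathbf{y}_k)$, where the Young parameter is tuned to $\epsilon=\tfrac{\beta_k\rho_R}{1-\beta_k\rho_R}$ so that $(1-\beta_k\rho_R)^2(1+\epsilon)=1-\beta_k\rho_R$, which is precisely what produces the $\tfrac{1}{\beta_k\rho_R}$ weight on the $\alpha_k^2\|\mathbf{y}_k\|^2$ term; the noise contribution separates under conditioning and is bounded via $c_2=\bar{c}^2\|\mathbf{I}-\frac{\mathbf{1}u^\intercal}{n}\|^2$.

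For the optimality gap \eqref{ie-opt} the starting point is $\frac{u^\intercal}{n}\mathbf{R}_k=\frac{u^\intercal}{n}$, which collapses the weighted-average recursion to $\bar{x}_{k+1}=\bar{x}_k-\alpha_k\frac{u^\intercal}{n}\mathbf{y}_k+\beta_k\frac{u^\intercal}{n}\xi_k^R$. I would expand $\|\bar{x}_{k+1}-x^*\|^2$, bound the squared increment with $\|a+b\|^2\le2\|a\|^2+2\|b\|^2$ (this is the origin of the factor $2$ in front of both $\tfrac{\alpha_k^2\|u\|^2}{n^2}\|\mathbf{y}_k\|^2$ and $\tfrac{\beta_k^2\|u\|^2}{n^2}\mathbb{E}[\|\xi_k^R\|^2]$), and condition on $\mathcal{F}_k$ so the surviving cross term is $-2\alpha_k\langle\bar{x}_k-x^*,\frac{u^\intercal}{n}\mathbf{y}_k\rangle$. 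The crucial move is to split $\frac{u^\intercal}{n}\mathbf{y}_k=\frac{u^\intercal v}{n}\nabla f(\bar{x}_k)^\intercal+\big(\frac{u^\intercal}{n}\mathbf{y}_k-\frac{u^\intercal v}{n}\nabla f(\bar{x}_k)^\intercal\big)$: the first piece gives $-2\alpha_k\frac{u^\intercal v}{n}\langle\nabla f(\bar{x}_k),\bar{x}_k-x^*\rangle\le-2\alpha_k\frac{u^\intercal v}{n}(f(\bar{x}_k)-f(x^*))$ by convexity (Assumption~\ref{ass:function}) and $u^\intercal v>0$, while the second is controlled by Young's inequality with $\lambda=\alpha_k/\beta_k$, producing the $\tfrac{\alpha_k^2}{\beta_k}\|\bar{x}_k-x^*\|^2$ and $\beta_k\|\frac{u^\intercal}{n}\mathbf{y}_k-\frac{u^\intercal v}{n}\nabla f(\bar{x}_k)^\intercal\|^2$ terms.

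I expect \eqref{ie-opt} to be the main obstacle. The consensus estimates are essentially mechanical once the projection identities and Lemma~\ref{lem:norm}(iii) are in place, but \eqref{ie-opt} demands careful bookkeeping with the two distinct eigenvectors $u$ and $v$, the insertion of the surrogate gradient $\frac{u^\intercal v}{n}\nabla f(\bar{x}_k)$, and Young's parameters chosen so that all constants match exactly; one must also verify that $\mathbf{y}_k=\mathbf{s}_{k+1}-\mathbf{s}_k$ is $\mathcal{F}_k$-measurable so that the noise cross terms genuinely vanish under the conditional expectation.
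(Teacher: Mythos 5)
Your proposal is correct and follows what is evidently the paper's own route: the consensus/disagreement decompositions via $\frac{v\mathbf{1}^\intercal}{n}$ and $\frac{\mathbf{1}u^\intercal}{n}$, the contraction factors from Lemma~\ref{lem:norm}(iii), Young's inequality with exactly the parameters that yield $\frac{1+\rho_\gamma^2}{2}$, $1-\beta_k\rho_R$ and the $\frac{1}{\beta_k\rho_R}$ weight, and conditioning on $\mathcal{F}_k$ to kill the zero-mean noise cross terms. All stated constants ($c_1$, $c_2$, the factors $2$ and the surrogate-gradient split giving $-2\alpha_k\frac{u^\intercal v}{n}(f(\bar{x}_k)-f(x^*))$) come out exactly as in the lemma, so nothing is missing.
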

\begin{proof}
See Supplementary Materials Section B for the detailed proof.	
\end{proof}

The next lemma is a technical result which quantifies the errors accumulated during gradient tracking and the upper bound of the gradient tracker.

\begin{lem}\label{lem:noi-bound}
	Under the conditions of Lemma \ref{lem:couple},\\
	(i)
	\begin{equation*}
	\left\|\mathbf{y}_{k}-\mathbf{y}_{k}^{'}\right\|^2\le c_3\sum_{t=1}^{k}\rho_\gamma^{k-t}\|\omega_t\|^2,
	\end{equation*}
    (ii)
	\begin{align*}
	\left\|\mathbf{y}_{k}\right\|^2
	&\le 4c_3\sum_{t=1}^{k}\rho_\gamma^{k-t}\|\omega_t\|^2+4\left\|\mathbf{y}_{k}^{'}-v\bar{y}_{k}^{'}\right\|^2\\
	&\quad+4\frac{\|v\|^2L^2}{n}\left\|\mathbf{x}_{k}^{'}-\mathbf{1}\bar{x}_k\right\|^2+4\|v\|^2L^2\left\|\bar{x}_k-x^*\right\|^2,
	\end{align*}
	(iii)
	\begin{align*}
	&\left\|\frac{u^\intercal}{n}\mathbf{y}_{k}-\frac{u^\intercal v}{n}\nabla f(\bar{x}_k)\right\|^2\\
	&\le 3\frac{c_3\|u\|^2}{n^2}\sum_{t=1}^{k}\rho_\gamma^{k-t}\|\omega_t\|^2\\
	&\quad+3\frac{\|u\|^2}{n^2}\left\|\mathbf{y}_{k}^{'}-v\bar{y}_{k}^{'}\right\|^2+3\frac{(u^\intercal v)^2\|v\|^2L^2}{n^3}\left\|\mathbf{x}_{k}^{'}-\mathbf{1}\bar{x}_k\right\|^2,
	\end{align*}
	where  $\omega_k=\gamma(\mathbf{z}_k-\mathbf{C}\mathbf{s}_k)$, $\rho_\gamma=1-\gamma\rho_C$,  $c_3=\left(\frac{2-\rho_\gamma}{1-\rho_\gamma}\right)\max\{1,\left\|\mathbf{C}_{\gamma}-\mathbf{I}\right\|_C^2\bar{c}^2\}\frac{1}{\rho_\gamma}$, $\rho_C$ and $\bar{c}$ are defined in Lemma \ref{lem:norm} (ii) and (iii).
\end{lem}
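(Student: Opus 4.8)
The plan is to treat part (i) as the core estimate and then read off (ii) and (iii) from it by elementary decompositions. For (i) I would first subtract the recursion defining the auxiliary tracker $\mathbf{y}_k'$ from (\ref{alg:y}): since the only discrepancy is the VRA error, this gives $\mathbf{d}_{k+1}=\mathbf{C}_\gamma\mathbf{d}_k+(\omega_{k+1}-\omega_k)$ for $\mathbf{d}_k\define\mathbf{y}_k-\mathbf{y}_k'$, where $\omega_k=\gamma(\mathbf{z}_k-\mathbf{C}\mathbf{s}_k)$, and the initialization $s_{i,1}=z_{i,1}$ forces $\mathbf{y}_1=\mathbf{g}_1=\mathbf{y}_1'$, i.e. $\mathbf{d}_1=\mathbf{0}$. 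To remove the awkward difference $\omega_{k+1}-\omega_k$ I would change variables to $\mathbf{h}_k\define\mathbf{d}_k-\omega_k$, which obeys the cleaner recursion $\mathbf{h}_{k+1}=\mathbf{C}_\gamma\mathbf{h}_k+(\mathbf{C}_\gamma-\mathbf{I})\omega_k$ with $\mathbf{h}_1=-\omega_1$.

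The delicate point is that $\mathbf{C}_\gamma$ has $1$ as an eigenvalue, so it is not a contraction and naive unrolling does not decay. The resolution is to observe that the whole trajectory $\{\mathbf{h}_k\}$ stays in the kernel of the projection $P_C\define\frac{v\mathbf{1}^\intercal}{n}$. Indeed $P_C(\mathbf{C}_\gamma-\mathbf{I})=\mathbf{0}$ because $\mathbf{1}^\intercal\mathbf{C}_\gamma=\mathbf{1}^\intercal$, so every driving term lies in $\ker P_C$; and $P_C\omega_1=\frac{\gamma v}{n}(\mathbf{1}^\intercal-\mathbf{1}^\intercal\mathbf{C})\mathbf{s}_1=\mathbf{0}$ using $\mathbf{z}_1=\mathbf{s}_1$, so $P_C\mathbf{h}_1=\mathbf{0}$. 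An induction then gives $P_C\mathbf{h}_k=\mathbf{0}$ for all $k$, whence $\mathbf{C}_\gamma\mathbf{h}_k=(\mathbf{C}_\gamma-P_C)\mathbf{h}_k$ and the recursion becomes genuinely contractive in $\|\cdot\|_C$, since Lemma \ref{lem:norm}(iii) gives $\|\mathbf{C}_\gamma-P_C\|_C<\rho_\gamma<1$. A weighted Young's inequality then yields $\|\mathbf{h}_{k+1}\|_C^2\le\rho_\gamma\|\mathbf{h}_k\|_C^2+\frac{\|\mathbf{C}_\gamma-\mathbf{I}\|_C^2}{1-\rho_\gamma}\|\omega_k\|_C^2$; unrolling this scalar recursion produces a geometric convolution sum, and converting between $\|\cdot\|_C$ and $\|\cdot\|$ through the equivalence constant $\bar{c}$ of Lemma \ref{lem:norm}(ii) and recombining $\mathbf{d}_k=\mathbf{h}_k+\omega_k$ gives the stated bound. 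The precise $c_3$ then emerges from the bookkeeping: the factor $1/\rho_\gamma$ from the index shift $\rho_\gamma^{k-1-t}=\rho_\gamma^{-1}\rho_\gamma^{k-t}$, the $\max\{1,\|\mathbf{C}_\gamma-\mathbf{I}\|_C^2\bar{c}^2\}$ separating the direct $\|\omega_k\|^2$ term from the convolution term, and the $\tfrac{2-\rho_\gamma}{1-\rho_\gamma}$ from collecting the pieces.

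For part (ii) I would write $\mathbf{y}_k=\mathbf{d}_k+(\mathbf{y}_k'-v\bar{y}_k')+\frac{v}{n}\sum_j[\nabla f_j(x_{j,k})-\nabla f_j(\bar{x}_k)]+v[\nabla f(\bar{x}_k)-\nabla f(x^*)]$, using the conservation identity $\mathbf{1}^\intercal\mathbf{y}_k'=\mathbf{1}^\intercal\mathbf{g}_k$ (so $\bar{y}_k'=\frac{1}{n}\sum_j\nabla f_j(x_{j,k})$) together with $\nabla f(x^*)=0$. Applying $\|a+b+c+d\|^2\le 4(\|a\|^2+\|b\|^2+\|c\|^2+\|d\|^2)$, bounding the first term by part (i), the third via Jensen's inequality and $L$-Lipschitzness by $\frac{\|v\|^2L^2}{n}\|\mathbf{x}_k-\mathbf{1}\bar{x}_k\|^2$, and the fourth by $\|v\|^2L^2\|\bar{x}_k-x^*\|^2$, gives the claim. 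Part (iii) is analogous: decompose $\frac{u^\intercal}{n}\mathbf{y}_k-\frac{u^\intercal v}{n}\nabla f(\bar{x}_k)$ into the three pieces $\frac{u^\intercal}{n}\mathbf{d}_k$, $\frac{u^\intercal}{n}(\mathbf{y}_k'-v\bar{y}_k')$ and $\frac{u^\intercal v}{n^2}\sum_j[\nabla f_j(x_{j,k})-\nabla f_j(\bar{x}_k)]$, apply $\|a+b+c\|^2\le 3(\|a\|^2+\|b\|^2+\|c\|^2)$ together with $\|\frac{u^\intercal}{n}M\|\le\frac{\|u\|}{n}\|M\|$, $\|\sum_j a_j\|^2\le n\sum_j\|a_j\|^2$, and Lipschitz continuity; a harmless factor $\|v\|^2\ge 1$ can be inserted to match the stated constant.

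I expect the main obstacle to be part (i), specifically producing the contraction despite the unit eigenvalue of $\mathbf{C}_\gamma$: one must isolate the invariant subspace $\ker P_C$, verify that both the initialization \emph{and} all forcing terms live there (which hinges precisely on $\mathbf{z}_1=\mathbf{s}_1$ and the column-stochasticity $\mathbf{1}^\intercal\mathbf{C}=\mathbf{1}^\intercal$), and then carry out the three-norm accounting carefully enough to land on the exact $c_3$. Once (i) is secured, parts (ii) and (iii) are routine splitting arguments.
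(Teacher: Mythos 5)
Your proposal is structurally correct and, judging from the form of the stated constant $c_3$ (the paper's own proof is only in the supplementary material, so I can compare against the reverse-engineered argument rather than the text itself), it follows the same mechanism the authors must use: subtract the two tracking recursions to get $\mathbf{d}_{k+1}=\mathbf{C}_\gamma\mathbf{d}_k+\omega_{k+1}-\omega_k$ with $\mathbf{d}_1=\mathbf{0}$, remove the telescoping forcing term, observe that both the initialization and all driving terms are annihilated by $P_C=\frac{v\mathbf{1}^\intercal}{n}$ (using $\mathbf{z}_1=\mathbf{s}_1$ and $\mathbf{1}^\intercal\mathbf{C}=\mathbf{1}^\intercal$) so that Lemma~\ref{lem:norm}(iii) supplies the contraction $\rho_\gamma$ on the relevant subspace, unroll the resulting scalar recursion into a geometric convolution, and then obtain (ii) and (iii) by the standard four- and three-way splittings using $\mathbf{1}^\intercal\mathbf{y}_k'=\mathbf{1}^\intercal\mathbf{g}_k$, $\nabla f(x^*)=0$, Jensen, and $L$-Lipschitzness (the extra $\|v\|^2\ge 1$ in (iii) is indeed harmless, as is reading $\mathbf{x}_k'$ as $\mathbf{x}_k$). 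The only place your sketch does not literally land on the statement is the constant in (i): converting $\|\mathbf{h}_k\|^2\le\bar{c}^2\|\mathbf{h}_k\|_C^2$ \emph{and} $\|\omega_t\|_C^2\le\bar{c}^2\|\omega_t\|^2$ produces $\bar{c}^4$ where $c_3$ contains only $\bar{c}^2$, and the crude split $\|\mathbf{h}_k+\omega_k\|^2\le 2\|\mathbf{h}_k\|^2+2\|\omega_k\|^2$ adds a further factor of $2$ relative to the $\frac{2-\rho_\gamma}{1-\rho_\gamma}\cdot\frac{1}{\rho_\gamma}\max\{1,\|\mathbf{C}_\gamma-\mathbf{I}\|_C^2\bar{c}^2\}$ bookkeeping; you therefore prove the lemma with a larger constant of the same form, which is immaterial for every downstream use of $c_3$ but worth tightening (e.g., by a weighted split of $\mathbf{h}_k+\omega_k$ and a single norm conversion) if you want the exact stated value.
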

\begin{proof}
See Supplementary Materials Section C for the detailed proof.	
\end{proof}


For presenting the convergence of VRA-GT method, we make the following assumption on the information-sharing noise. 

\begin{ass}[\textbf{information-sharing noise {\cite{wangGT2022}}}]\label{ass:noise}
	For every $i\in\mathcal{V}$, the noise sequence $\{\xi_{i,k}\}$ and $\{\zeta_{i,k}\}$ are zero-mean independent random variables, and independent of $\mathbf{s}_1,\mathbf{x}_1$ and $\mathbf{z}_1$. Moreover,  $\sum_{t=1}^\infty \eta_{t}^2 \mathbb{E}\left[\|\zeta_{i,t}\|^2\right]<\infty$,  $\sum_{t=1}^\infty \beta_{t}^2 \mathbb{E}\left[\|\xi_{i,t}\|^2\right]<\infty$.
\end{ass}

We are ready to present the almost sure convergence of VRA-GT method.
\begin{thm}\label{thm:VRA-GT conv}
Suppose that (a) parameters $\beta_k,\gamma,\eta_k<1$ and $\{\beta_k\}$ is not increase,  $\sum_{t=1}^\infty \beta_{t}=\infty$, $\sum_{t=1}^\infty \beta_{t}^2<\infty$, $\sum_{t=1}^\infty \eta_{t}=\infty$, $\sum_{t=1}^\infty \eta_{t}^2<\infty$, (b) stepsize $\sum_{t=1}^\infty \alpha_{t}=\infty$, $\sum_{t=1}^\infty \frac{\alpha_{t}^2}{\beta_{t}}<\infty$, $\lim_{k\rightarrow\infty}\frac{\alpha_k}{\beta_k}=0$, (c) Assumptions \ref{ass:function}-\ref{ass:noise} hold. Then for all $i\in\mathcal{V}$, $x_{i,t}$  converges to a same random point in $X^*\define\{x\in\mathbb{R}^d:f(x)=\min_x f(x)\}$ almost surely.	
\end{thm}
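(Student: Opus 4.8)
The plan is to fold the three coupled one-step estimates of Lemma~\ref{lem:couple} into a single nonnegative Lyapunov function and drive it with the Robbins--Siegmund lemma. Fix an arbitrary $x^*\in X^*$ and set
\begin{equation*}
V_k=\left\|\bar{x}_{k}-x^*\right\|^2+c_a\left\|\mathbf{x}_{k}-\mathbf{1}\bar{x}_{k}\right\|_R^2+c_b\left\|\mathbf{y}_{k}^{'}-v\bar{y}_{k}^{'}\right\|_C^2,
\end{equation*}
with constants $c_a,c_b>0$ to be fixed later. Before anything else I would verify that the hypotheses of Theorem~\ref{thm:VRA-conv} hold: condition (c) of the present theorem together with Assumption~\ref{ass:noise} yield (a)--(c) of that theorem, so $\phi_k=\mathbf{z}_k-\mathbf{C}\mathbf{s}_k\to\mathbf{0}$ almost surely, $\{\|\phi_k\|^2\}$ is almost surely bounded, and $\sum_t\eta_t\|\phi_t\|^2<\infty$ almost surely. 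Writing $\omega_k=\gamma\phi_k$ as in Lemma~\ref{lem:noi-bound}, these are the facts that will ultimately tame the accumulated history terms.

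The first real step is to \emph{decouple}. The quantity $\|\mathbf{y}_k\|^2$ and the gradient-mismatch term $\|\frac{u^\intercal}{n}\mathbf{y}_k-\frac{u^\intercal v}{n}\nabla f(\bar x_k)^\intercal\|^2$ appear on the right-hand sides of \eqref{ie-y}--\eqref{ie-opt} and knot all three errors together; I would substitute Lemma~\ref{lem:noi-bound}(ii)--(iii) to replace them by expressions in the three Lyapunov components, the convolution $\sum_{t\le k}\rho_\gamma^{k-t}\|\omega_t\|^2$, and the injected noise $\mathbb{E}[\|\xi_k^R\|^2]$. The weighted sum then gives a bound of the shape
\begin{equation*}
\mathbb{E}\!\left[V_{k+1}\mid\mathcal{F}_k\right]\le(1+a_k)V_k-b_k+c_k.
\end{equation*}
Here $a_k$ collects the factor $\alpha_k^2/\beta_k$ multiplying $\|\bar x_k-x^*\|^2$ in \eqref{ie-opt}, summable by hypothesis (b). The term $b_k$ must be kept nonnegative: it carries the genuine descent $2\alpha_k\frac{u^\intercal v}{n}(f(\bar x_k)-f(x^*))$ from \eqref{ie-opt}, the consensus contraction of order $\beta_k\rho_R$ from \eqref{ie-x}, and the fixed tracking contraction of slack $\tfrac{1-\rho_\gamma^2}{2}$ from \eqref{ie-y}. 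The positive feedback that the mismatch term feeds back into consensus and tracking scales like $\beta_k$, while the $\|\mathbf{y}_k\|^2$ substitution contributes orders $\alpha_k^2$ and $\alpha_k^2/\beta_k$; since $\alpha_k/\beta_k\to0$ forces $\alpha_k^2/\beta_k=o(\beta_k)$ and $\beta_k\to0$, these are eventually dominated once $c_a$ (resp.\ $c_b$) is taken large enough relative to the constants of Lemmas~\ref{lem:couple}--\ref{lem:noi-bound}, which is what preserves $b_k\ge0$ for all large $k$.

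What remains is to show $\sum_k c_k<\infty$ almost surely, and this is where I expect the \textbf{main obstacle}. The noise part $\sum_k(\beta_k^2+\alpha_k^2/\beta_k)\mathbb{E}[\|\xi_k^R\|^2]$ is finite by Assumption~\ref{ass:noise}. The history contributions, after interchanging the order of summation and using $\sum_{k\ge t}\rho_\gamma^{k-t}=\tfrac{1}{1-\rho_\gamma}$ together with $\beta_k$ nonincreasing, collapse to weighted sums $\sum_t(\alpha_t^2+\alpha_t^2/\beta_t+\beta_t)\|\omega_t\|^2$. The pieces carrying $\alpha_t^2$ and $\alpha_t^2/\beta_t$ are finite almost surely because $\|\omega_t\|^2$ is almost surely bounded while $\sum\alpha_t^2<\infty$ and $\sum\alpha_t^2/\beta_t<\infty$. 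The delicate piece is $\sum_t\beta_t\|\omega_t\|^2$: since $\sum_t\beta_t=\infty$, boundedness of $\|\omega_t\|^2$ does not suffice, and one must instead exploit the \emph{decay} of $\|\phi_t\|^2$ from Theorem~\ref{thm:VRA-conv} --- concretely, the summability $\sum_t\eta_t\|\phi_t\|^2<\infty$ in combination with the relative decay rates of $\beta_k$ and $\eta_k$ under hypothesis (a). Establishing that the VRA error is effectively square-summable against $\beta_k$ is the technical heart of the argument.

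With the Robbins--Siegmund inequality in hand, the conclusion follows along standard lines: $V_k$ converges almost surely, and $\sum_k b_k<\infty$ gives $\sum_k\alpha_k(f(\bar x_k)-f(x^*))<\infty$, $\sum_k\beta_k\|\mathbf{x}_k-\mathbf{1}\bar x_k\|^2<\infty$, and $\sum_k\|\mathbf{y}_k'-v\bar y_k'\|^2<\infty$. The last yields tracking error $\to\mathbf{0}$; combining $\sum_k\beta_k\|\mathbf{x}_k-\mathbf{1}\bar x_k\|^2<\infty$ with $\sum_k\beta_k=\infty$ and the almost-sure convergence of the consensus error (its own supermartingale from \eqref{ie-x}) forces $\|\mathbf{x}_k-\mathbf{1}\bar x_k\|\to0$. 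Likewise $\sum_k\alpha_k(f(\bar x_k)-f(x^*))<\infty$ with $\sum_k\alpha_k=\infty$ gives $\liminf_k(f(\bar x_k)-f(x^*))=0$, so some subsequence of $\bar x_k$ approaches $X^*$. Running the whole estimate for each $x^*$ in a countable dense subset of $X^*$ on a common almost-sure event shows $\|\bar x_k-x^*\|$ converges for every optimum, so the subsequential limit $x^\infty\in X^*$ is in fact the limit of the entire sequence $\bar x_k$. Finally, the decomposition $x_{i,k}=\bar x_k+(x_{i,k}-\bar x_k)$ and the vanishing consensus error give $x_{i,k}\to x^\infty\in X^*$ for every $i$, almost surely.
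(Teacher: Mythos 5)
Your proposal follows essentially the same route as the paper's proof: the same three-term Lyapunov function assembled from \eqref{ie-y}--\eqref{ie-opt}, the same substitution of Lemma~\ref{lem:noi-bound}(ii)--(iii) to decouple $\|\mathbf{y}_k\|^2$ and the gradient-mismatch term, the same Robbins--Siegmund application, the same interchange of summation (using that $\beta_k$ is nonincreasing) to collapse the convolution $\sum_{t\le k}\rho_\gamma^{k-t}\|\omega_t\|^2$, and the same subsequence-plus-arbitrary-$x^*$ endgame. Your countable-dense-subset refinement of that last step is in fact more careful than the paper's ``since $x^*$ is arbitrary, take $x^*=\hat{x}$,'' which requires exactly such a common-almost-sure-event argument to be rigorous.

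The one place you stop short is the step you yourself flag: proving $\sum_t\beta_t\|\omega_t\|^2<\infty$ almost surely. This is a genuine gap, not a deferred routine computation. Theorem~\ref{thm:VRA-conv} supplies $\sum_t\eta_t\|\phi_t\|^2<\infty$ and $\|\phi_t\|^2\to0$, but hypothesis (a) imposes no relation between $\beta_k$ and $\eta_k$: with, say, $\beta_k=k^{-0.6}$ and $\eta_k=k^{-0.99}$ all conditions in (a) hold while $\beta_k/\eta_k\to\infty$, so summability against $\eta_t$ together with mere decay of $\|\phi_t\|^2$ does not yield summability against $\beta_t$. Closing this step requires either an added hypothesis of the form $\beta_k=\mathcal{O}(\eta_k)$ or a quantitative decay rate for $\mathbb{E}[\|\phi_t\|^2]$ (available in the second half of Theorem~\ref{thm:VRA-conv}, but only under bounded noise variance, which the present theorem does not assume). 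It is worth noting that the paper's own proof does not close this step either --- it simply asserts $\sum_k\frac{\beta_k}{1-\rho_\gamma}\|\omega_k\|^2<\infty$ after the summation interchange --- so you have correctly located the weakest joint of the argument even though you did not repair it.
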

\begin{proof}
	We employ Lemma A.1 in Supplementary Materials Section A to prove the convergence of $x_{i,t}$. To this end, we rewrite the recursions (\ref{ie-y})-(\ref{ie-opt}) in the form of (A.1) in Lemma A.1. Denote 
	{\small$$v_{k+1}=\left\|\mathbf{y}_{k+1}^{'}-v\bar{y}_{k+1}^{'}\right\|_C^2+\left\|\mathbf{x}_{k+1}-\mathbf{1}\bar{x}_{k+1}\right\|_R^2+c_*\left\|\bar{x}_{k+1}-x^*\right\|^2,$$}
	where $c_*$ is some positive constant to be determined later. By Lemma \ref{lem:couple},
	\begin{align*}
	&\mathbb{E}\left[v_{k+1}|\mathcal{F}_k\right]\\
	&\le \frac{1+\rho_\gamma^2}{2}\left\|\mathbf{y}_{k}^{'}-v\bar{y}_{k}^{'}\right\|_C^2\\
	&\quad+\left(1-\beta_k\rho_R+c_1\|\mathbf{R}-\mathbf{I}\|^2\bar{c}^2\beta_{k}^2\right)\left\|\mathbf{x}_{k}-\mathbf{1} \bar{x}_{k}\right\|_R^2\\
	&\quad +\left(1+\frac{\alpha_k^2}{\beta_k}\right)c_*\left\|\bar{x}_{k}-x^*\right\|^2-2\alpha_k\frac{u^\intercal v}{n}c_*\left(f(\bar{x}_k)- f(x^*)\right)\\
	&\quad+\left(c_1+c_2+\frac{2\|u\|^2}{n^2}c_*\right)\beta_k^2\mathbb{E}\left[\left\|\xi_k^R\right\|^2\right]\\
	&\quad+c_*\beta_k\left\|\frac{u^\intercal}{n}\mathbf{y}_{k}-\frac{u^\intercal v}{n}\nabla f(\bar{x}_k)\right\|^2\\
	&\quad+\left(c_1\alpha_k^2+\frac{\alpha_k^2c_2}{\beta_k\rho_R}+\frac{2\alpha_k^2\|u\|^2}{n^2}c_*\right)\left\|\mathbf{y}_{k}\right\|^2.
	\end{align*}
	By  Lemma \ref{lem:noi-bound} (ii) and (iii) and 
	denoting
	{\small\begin{align*}
	&p_k\\
	&=\min\left\{\frac{1-\rho_\gamma^2}{2}-3c_*\frac{\|u\|^2}{n^2}\beta_k, \left(\rho_R-3c_*\frac{(u^\intercal v)^2\|v\|^2L^2}{n^3}\right)\beta_k,\right.\\
	&\quad\quad\quad\left.2\alpha_k\frac{u^\intercal v}{n}c_*\right\},\\
	&q_k\\
	&=\max \left\{4\left(c_1\alpha_k^2+\frac{\alpha_k^2c_2}{\beta_k\rho_R}+\frac{2\alpha_k^2\|u\|^2}{n^2}c_*\right),c_1\|\mathbf{R}-\mathbf{I}\|^2\bar{c}^2\beta_{k}^2\right.\\
	&\quad\quad\quad +4\frac{\|v\|^2L^2}{n}\left(c_1\alpha_k^2+\frac{\alpha_k^2c_2}{\beta_k\rho_R}+\frac{2\alpha_k^2\|u\|^2}{n^2}c_*\right),\\
	&\quad\quad\quad\left. \frac{\alpha_k^2}{\beta_k}+4\left(c_1\alpha_k^2+\frac{\alpha_k^2c_2}{\beta_k\rho_R}+\frac{2\alpha_k^2\|u\|^2}{n^2}c_*\right)\|v\|^2L^2c_*^{-1}\right\},
	\end{align*}}
	we have
	\begin{align}\label{ie-3}
	&\mathbb{E}\left[v_{k+1}|\mathcal{F}_k\right]\notag\\
	&\le(1+q_k)v_k-p_k\left(\left\|\mathbf{y}_{k}^{'}-v\bar{y}_{k}^{'}\right\|_C^2+\left\|\mathbf{x}_{k}-\mathbf{1}\bar{x}_{k}\right\|_R^2\right.\notag\\
	&\quad+f(\bar{x}_k)- f(x^*)\bigg)\notag\\
	&\quad+\left(c_1+c_2+\frac{2\|u\|^2}{n^2}c_*\right)\beta_k^2\mathbb{E}\left[\left\|\xi_k^R\right\|^2\right]\notag\\
	&\quad +\left(4c_3\left(c_1\alpha_k^2+\frac{\alpha_k^2c_2}{\beta_k\rho_R}+\frac{2\alpha_k^2\|u\|^2}{n^2}c_*\right)\right.\notag\\
	&\quad\left.+3c_*\frac{c_3\|u\|^2}{n^2}\beta_k\right)\sum_{t=1}^{k}\rho_\gamma^{k-t}\|\omega_t\|^2.
	\end{align}
	Obviously, the above inequality falls in the form of (A.1) in Lemma A.1. 
	
	Next, we verify the conditions of Lemma A.1. 
	
	By the definitions of $\alpha_k$ and $\beta_k$, $q_k$ is summable namely $\sum_{k=1}^\infty q_k<\infty$.
	
	Given $\alpha_k$ and $\beta_k$, we may choose $c_*$ such that $p_k>0$ for all $k\ge 1$, which implies the second term on the right hand side of (\ref{ie-3}) is nonnegative.  
  
  By Assumption \ref{ass:noise}, 
	\begin{equation*}
	\sum_{k=1}^\infty\left(c_1+c_2+\frac{2\|u\|^2}{n^2}c_*\right)\beta_k^2\mathbb{E}\left[\left\|\xi_k^R\right\|^2\right]<\infty.
	\end{equation*}
	In addition, {\small$$\left(4c_3\left(c_1\alpha_k^2+\frac{\alpha_k^2c_2}{\beta_k\rho_R}+\frac{2\alpha_k^2\|u\|^2}{n^2}c_*\right)+3c_*\frac{c_3\|u\|^2}{n^2}\beta_k\right)=\mathcal{O}\left(\beta_{k}\right)$$}
	and
	\begin{align*}
	&\sum_{k=1}^\infty\beta_k\sum_{t=1}^{k}\rho_\gamma^{k-t}\|\omega_t\|^2\\
	&=\lim_{K\rightarrow\infty}\sum_{k=1}^K\beta_k\sum_{t=1}^{k}\rho_\gamma^{k-t}\|\omega_t\|^2\\
	&=\lim_{K\rightarrow\infty}\sum_{k=1}^K\left(\sum_{t=k}^K\beta_t\rho_\gamma^{t-k}\right)\|\omega_k\|^2\\
	&\le\lim_{K\rightarrow\infty}\sum_{k=1}^K\frac{\beta_k}{1-\rho_\gamma}\|\omega_k\|^2=\sum_{k=1}^\infty\frac{\beta_t}{1-\rho_\gamma}\|\omega_k\|^2<\infty,
	\end{align*}
	where the first inequality holds as $\beta_{k}$ is not increase. Then the last two terms on the right hand side of (\ref{ie-3}) are summable.
	
	Summarizing above results,  the conditions of Lemma A.1 hold and thus  $v_k$ converges to some finite random variable $v^\infty$ and 
	{\small\begin{align*}
	&\sum_{k=1}^\infty p_k\left(\left\|\mathbf{y}_{k}^{'}-v\bar{y}_{k}^{'}\right\|_C^2+\left\|\mathbf{x}_{k}-\mathbf{1}\bar{x}_{k}\right\|_R^2+f(\bar{x}_k)- f(x^*)\right)<\infty
	\end{align*}}
	almost surely.  Noting that $\sum_{k=1}^\infty p_k=\infty$,  for an arbitrary sample trajectory, there exists a sub-sequence $\{k_0\}\subseteq \mathbb{N}$ such that 
	{\small\begin{align*}
	&\lim_{k_0\rightarrow\infty}\left\|\mathbf{y}_{k_0}^{'}-v\bar{y}_{k_0}^{'}\right\|_C^2+\left\|\mathbf{x}_{k_0}-\mathbf{1}\bar{x}_{k_0}\right\|_R^2+f(\bar{x}_{k_0})- f(x^*)=0.
	\end{align*}}
	Then by the convergence of $v_k$,
	\begin{equation}\label{x-conv}
	\lim_{k_0\rightarrow\infty} c_*\left\|\bar{x}_{k_0}-x^*\right\|^2=\lim_{k_0\rightarrow\infty} v_{k_0}=v^\infty.
	\end{equation}
	By the fact $\lim_{k_0\rightarrow\infty}f(\bar{x}_{k_0})- f(x^*)=0$, the continuity of $f(x)$ and the boundness of $\{\bar{x}_{k_0}\}$, 
	there exists a subsequence of $\{\bar{x}_{k_0}\}$ converge to a point $\hat{x}$ in $X^*$.  Since the choice of $x^*$
	is arbitrary, we take $x^*=\hat{x}$ in $v_k$. Then by (\ref{x-conv}),  $v^\infty=0$  on this sample trajectory. Therefore, 
	\begin{align*}
	\lim_{k\rightarrow\infty} \left\|\bar{x}_{k}-\hat{x}\right\|^2=0,\quad \lim_{k\rightarrow\infty} \left\|\mathbf{x}_{k+1}-\mathbf{1}\bar{x}_{k+1}\right\|_R^2=0,
	\end{align*}
	almost surely, and hence the desired result follows.
\end{proof}
Theorem \ref{thm:VRA-GT conv} shows that all agents can converge to a same optimal solution almost surely. Compared with \cite{Wang2022Tailoring},  the proposed method does not requires the level set and gradient are bounded. Compared with \cite{wangGT2022}, the proposed method does not require that the eigenvector of the row weight matrix is known or  estimated iteratively, and that the communication network induced by row weight matrix has to be strongly connected.

\begin{figure*}[htb]
	\centering
	\subfigure[$\sigma^2_\xi=\sigma^2_\zeta=1$]{
		\includegraphics[width=2.45in]{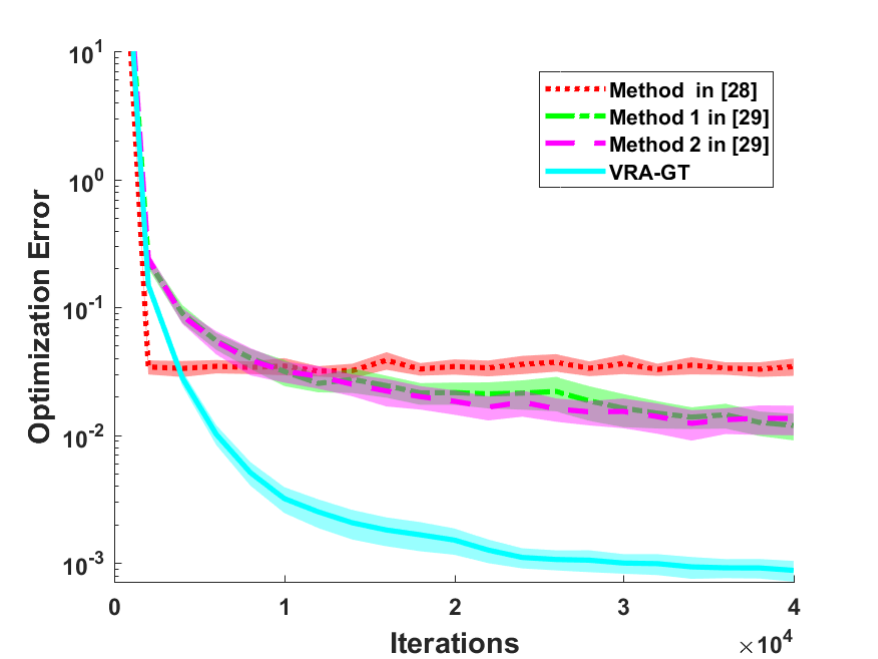}
	}\hspace{-9mm}
	\subfigure[$\sigma^2_\xi=\sigma^2_\zeta=25$]{
		\includegraphics[width=2.45in]{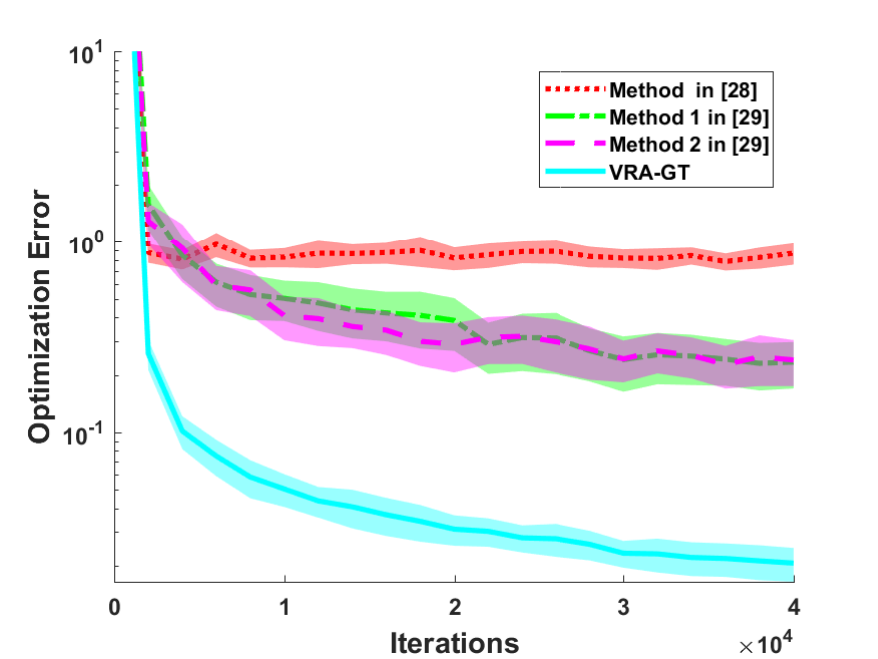}
	}\hspace{-9mm}
	\subfigure[$\sigma^2_\xi=\sigma^2_\zeta=50$]{
		\includegraphics[width=2.45in]{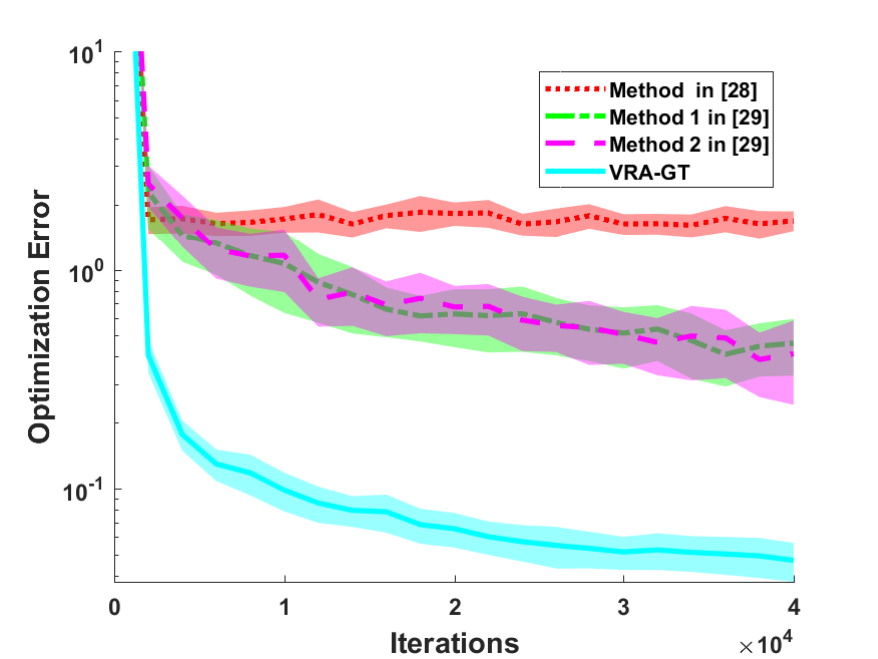}
	}
	\caption{{\small Evolutions of $\sum_{j=1}^n \left\|x_{i,k}-x^*\right\|^2$  w.r.t to the number of iterations.}}
	\label{fig-1}
\end{figure*}
The following theorem establishes the convergence rate of VRA-GT method in the mean square sense.
\begin{thm}\label{thm:VRA-GT conv-1}
Suppose that (a) Assumptions \ref{ass:function}-\ref{ass:matrix} hold, (b) $f(x)$ is $\mu$-strongly convex and there exists a constant $\sigma$ such that $\mathbb{E}\left[\|\zeta_{i,t}\|^2\right]\le \sigma^2$, $\mathbb{E}\left[\|\xi_{i,t}\|^2\right]\le \sigma^2$, (c) $\gamma<1$, $\eta_k=\frac{a_1}{k^\eta}$, $\beta_k=\frac{a_2}{k^\beta}$, $\alpha_k=\frac{a_3}{k^\alpha}$, where $a_1,a_2,a_3\in (0,1]$, $\eta,\alpha,\beta\in(0.5,1)$ and $\alpha,\beta$ satisfy $\alpha>\frac{1+\beta}{2}$.
Then
\begin{equation}\label{ie-4}
\mathbb{E}\left[v_{k+1}\right]\le\mathcal{O}\left(\frac{1}{k^{\min\{2\beta-\alpha,\beta+\eta-\alpha\}}}\right),
\end{equation}
where $v_{k+1}=\left\|\mathbf{y}_{k+1}^{'}-v\bar{y}_{k+1}^{'}\right\|_C^2+\left\|\mathbf{x}_{k+1}-\mathbf{1}\bar{x}_{k+1}\right\|_R^2+c_*\left\|\bar{x}_{k+1}-x^*\right\|^2$.
\end{thm}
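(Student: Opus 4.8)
The plan is to upgrade the supermartingale recursion (\ref{ie-3}) obtained in the proof of Theorem \ref{thm:VRA-GT conv} into a genuine contraction on $\mathbb{E}[v_k]$ by exploiting strong convexity, and then to feed the resulting scalar recursion into the standard decaying-stepsize lemma \cite[Lemmas 4, 5 in Chapter 2]{polyak1987Introduction}. First I would take full expectation in (\ref{ie-3}). Since $f$ is $\mu$-strongly convex, $f(\bar{x}_k)-f(x^*)\ge \frac{\mu}{2}\|\bar{x}_k-x^*\|^2$, so the nonnegative term $-p_k\big(\|\mathbf{y}_k'-v\bar{y}_k'\|_C^2+\|\mathbf{x}_k-\mathbf{1}\bar{x}_k\|_R^2+(f(\bar{x}_k)-f(x^*))\big)$ can be lower-bounded by $\tilde{p}_k v_k$ with $\tilde{p}_k=p_k\min\{1,\mu/(2c_*)\}$. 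This converts (\ref{ie-3}) into the scalar recursion $\mathbb{E}[v_{k+1}]\le(1+q_k-\tilde{p}_k)\mathbb{E}[v_k]+b_k$, where $b_k$ collects the two nonnegative noise contributions.

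Second, I would pin down the orders of the coefficients under the choices $\eta_k=a_1k^{-\eta}$, $\beta_k=a_2k^{-\beta}$, $\alpha_k=a_3k^{-\alpha}$. Because $\alpha>(1+\beta)/2>\beta$, the step-size $\alpha_k$ decays fastest, so for large $k$ the minimum defining $p_k$ is attained at its third entry $2\alpha_k\frac{u^\intercal v}{n}c_*$; hence $\tilde{p}_k\asymp\alpha_k=\Theta(k^{-\alpha})$. Inspecting $q_k$, its slowest-decaying entries are of order $\alpha_k^2/\beta_k=O(k^{-(2\alpha-\beta)})$ and $\beta_k^2=O(k^{-2\beta})$; the constraint $\alpha>(1+\beta)/2$ gives $2\alpha-\beta>1$, while $\beta>1/2$ gives $2\beta>1$, so both exponents exceed $\alpha$ and therefore $q_k=o(\alpha_k)$. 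Consequently $1+q_k-\tilde{p}_k\le 1-\tfrac{1}{2}\tilde{p}_k\le 1-ck^{-\alpha}$ for all large $k$ and some $c>0$, giving a true contraction.

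Third, I would estimate the forcing $b_k$. The pull-noise term is $O(\beta_k^2)\mathbb{E}[\|\xi_k^R\|^2]=O(k^{-2\beta})$ by the variance bound $\mathbb{E}[\|\xi_{i,t}\|^2]\le\sigma^2$. For the aggregation-error term, recall $\omega_t=\gamma(\mathbf{z}_t-\mathbf{C}\mathbf{s}_t)$; since the present $\eta_k$ matches the hypotheses of Theorem \ref{thm:VRA-conv}, estimate (\ref{VRA-conv-1}) yields $\mathbb{E}[\|\omega_t\|^2]=O(\eta_t)=O(t^{-\eta})$. The coefficient multiplying $\sum_{t=1}^{k}\rho_\gamma^{k-t}\|\omega_t\|^2$ is, by the same order comparison, $O(\beta_k)$, and the geometric–polynomial convolution obeys $\sum_{t=1}^{k}\rho_\gamma^{k-t}t^{-\eta}=O(k^{-\eta})$ (the geometric weight localizes the sum near $t=k$). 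Hence this term is $O(\beta_k\eta_k)=O(k^{-(\beta+\eta)})$, and altogether $b_k=O(k^{-\min\{2\beta,\,\beta+\eta\}})$.

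Finally, with $\mathbb{E}[v_{k+1}]\le(1-ck^{-\alpha})\mathbb{E}[v_k]+O(k^{-\min\{2\beta,\,\beta+\eta\}})$ in hand—noting $\alpha\in(0.5,1)$ and, since $\alpha<1<\min\{2\beta,\beta+\eta\}$, the forcing exponent strictly exceeds the contraction exponent—I would invoke \cite[Lemmas 4, 5 in Chapter 2]{polyak1987Introduction} to conclude $\mathbb{E}[v_k]=O(k^{-(\min\{2\beta,\beta+\eta\}-\alpha)})$, which is exactly (\ref{ie-4}). I expect the main obstacle to be the order bookkeeping that makes the contraction outrun the accumulated noise: one must simultaneously verify $q_k=o(\alpha_k)$ (so the net multiplier is a contraction) and $\min\{2\beta,\beta+\eta\}>\alpha$ (so the forcing decays strictly faster than the contraction rate), both of which hinge on coupling $\alpha>(1+\beta)/2$ with $\beta,\eta>1/2$; the convolution bound $\sum_{t}\rho_\gamma^{k-t}t^{-\eta}=O(k^{-\eta})$ is the only genuinely analytic estimate and I would handle it by splitting the sum at $t=k/2$.
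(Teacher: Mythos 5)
Your proposal is correct and follows essentially the same route as the paper: take expectations in the recursion (\ref{ie-3}), use strong convexity to absorb the negative term into a contraction $1-\mathcal{O}(k^{-\alpha})$ on $\mathbb{E}[v_k]$, bound the forcing terms by $\mathcal{O}(k^{-2\beta})$ and (via Theorem \ref{thm:VRA-conv} and the geometric--polynomial convolution estimate) $\mathcal{O}(k^{-(\beta+\eta)})$, and conclude with \cite[Lemma 5, Chapter 2]{polyak1987Introduction}. Your order bookkeeping for $p_k$ and $q_k$ is in fact more explicit than the paper's, and your split-at-$t=k/2$ argument replaces the paper's citation of an external lemma for the convolution bound; these are the only (cosmetic) differences.
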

\begin{proof}
Under the conditions (b) and (c), Theorem \ref{thm:VRA-conv} implies
$$\mathbb{E}\left[\|\omega_k\|^2\right]\le c_\eta\eta_t$$
for some $c_\eta>0$.
Taking exception on both sides of (\ref{ie-3}) and substituting above relation into it,
\begin{align*}
&\mathbb{E}\left[v_{k+1}\right]\notag\\
&\le(1+q_k)\mathbb{E}\left[v_k\right]-p_k\left(\left\|\mathbf{y}_{k}^{'}-v\bar{y}_{k}^{'}\right\|_C^2+\left\|\mathbf{x}_{k}-\mathbf{1}\bar{x}_{k}\right\|_R^2\right.\notag\\
&\quad+f(\bar{x}_k)- f(x^*)\bigg)\notag\\
&\quad+\left(c_1+c_2+\frac{2\|u\|^2}{n^2}c_*\right)\beta_k^2\mathbb{E}\left[\left\|\xi_k^R\right\|^2\right]\notag\\
&\quad +\left(4c_3\left(c_1\alpha_k^2+\frac{\alpha_k^2c_2}{\beta_k\rho_R}+\frac{2\alpha_k^2\|u\|^2}{n^2}c_*\right)\right.\notag\\
&\quad\left.+3c_*\frac{c_3\|u\|^2}{n^2}\beta_k\right)\sum_{t=1}^{k}\rho_\gamma^{k-t}c_\eta\eta_t\notag\\
&\le(1+q_k)\mathbb{E}\left[v_k\right]-p_k\left(\left\|\mathbf{y}_{k}^{'}-v\bar{y}_{k}^{'}\right\|_C^2+\left\|\mathbf{x}_{k}-\mathbf{1}\bar{x}_{k}\right\|_R^2\right.\notag\\
&\quad+\mu\mathbb{E}\left[\left\|\bar{x}_{k+1}-x^*\right\|^2\right]\bigg)\notag\\
&\quad+\left(c_1+c_2+\frac{2\|u\|^2}{n^2}c_*\right)\beta_k^2n\sigma^2\notag\\
&\quad +\left(4c_3\left(c_1\alpha_k^2+\frac{\alpha_k^2c_2}{\beta_k\rho_R}+\frac{2\alpha_k^2\|u\|^2}{n^2}c_*\right)\right.\notag\\
&\quad\left.+3c_*\frac{c_3\|u\|^2}{n^2}\beta_k\right)\mathcal{O}\left(\eta_k\right)\notag\\
&\le\left(1+q_k-p_k\min\left\{1,\frac{\mu}{c_*}\right\}\right)\mathbb{E}\left[v_k\right]\notag\\
&\quad+\left(c_1+c_2+\frac{2\|u\|^2}{n^2}c_*\right)\beta_k^2n\sigma^2\notag\\
&\quad +\left(4c_3\left(c_1\alpha_k^2+\frac{\alpha_k^2c_2}{\beta_k\rho_R}+\frac{2\alpha_k^2\|u\|^2}{n^2}c_*\right)\right.\notag\\
&\quad\left.+3c_*\frac{c_3\|u\|^2}{n^2}\beta_k\right)\mathcal{O}\left(\eta_k\right),
\end{align*}
where the second inequality follows from the strong convexity of $f(x)$, the fact $\mathbb{E}\left[\|\xi_{i,t}\|^2\right]\le \sigma^2$ and the relation $\sum_{t=1}^{k}\rho_\gamma^{k-t}c_\eta\eta_t\le \mathcal{O}\left(\eta_k\right)$ (\cite[Lemma 3 in Appendix A]{zhao2020asymptotic}). By condition (c), 
\begin{equation*}
1+q_k-p_k\min\left\{1,\frac{\mu}{c_*}\right\}\le 1-\mathcal{O}\left(\frac{1}{k^\alpha}\right)
\end{equation*}
and then
\begin{align*}
&\mathbb{E}\left[v_{k+1}\right]\notag\\
&\le\left(1-\mathcal{O}\left(\frac{1}{k^\alpha}\right)\right)\mathbb{E}\left[v_k\right]+\mathcal{O}\left(\frac{1}{k^{2\beta}}+\frac{1}{k^{\beta+\eta}}\right).
\end{align*}
Applying \cite[Lemma 5 in Chapter 2]{polyak1987Introduction}  on above relation, we arrive (\ref{ie-4}).
\end{proof}

Theorem \ref{thm:VRA-GT conv-1} establishes the convergence rate of VRA-GT method when the variance of the information-sharing noise is bounded and the objective function is strongly convex. Particularly, setting $\eta=\beta=1-5/8\epsilon$ and  $\alpha=1-0.25\epsilon$,  the convergence rate of VRA-GT is  $\mathcal{O}\left(\frac{1}{k^{1-\epsilon}}\right)$, where $\epsilon$ can close to zero infinitely. Moreover, Theorem \ref{thm:VRA-GT conv-1} may complement the convergence rate result of arriving in the optimal solution's neighborhood \cite{Sri2011async,pu2020robust,Chen2022Priv}.  

\section{Experimental Results}\label{sec:num-exm}

In this section, we perform a simulation study to illustrate our theoretic findings on the convergence properties of VRA-GT method.  Consider the ridge regression problem \cite{wangGT2022}:
\begin{equation}\label{sim-pro}
\min_{x\in\mathbb{R}^d} ~f(x)=\sum_{j=1}^n\left\|M_j x-v_j\right\|^2+r\|x\|^2,\\
\end{equation}
where $f_j(x)\define\left\|w_j^\intercal x-v_j\right\|^2+r\|x\|^2$ is the objective function of agent $j$, $M_j\in \mathbb{R}^{d_1\times d}$ is  the measurement matrix, $v_j\in \mathbb{R}^{d_1}$  is a noisy measurement, $r$ is the regularization parameter.
In problem (\ref{sim-pro}),
each agent $ i\in\mathcal{V}$ has access to sample $(M_i,v_i)$ given by the linear model
$v_i=M_i^\intercal \tilde{x}+\nu_i,$
where  $\nu_i $ is the measurement noise and $\tilde{x}$ is  an unknown parameter. 

In this experiment, the settings of predetermined parameters and network topology follow from \cite{wangGT2022}. We make $r=0.05$, $d_1=3$, $d=2$, $M_i$ is generated from a uniform distribution in the unit $\mathbb{R}^{d_1\times d}$ space, $\nu_i$ follows an i.i.d. Gaussian process
with zero mean and unit variance, $\tilde{x}$ is evenly located in $[1,~10]^d$ for $\forall i\in\mathcal{V}$.  The directed graph $\mathcal{G}$ made up of 100 agents is generated by adding random links to a ring network, where a directed link exists between any two nonadjacent nodes with a probability $p=0.3$. 
For $\forall i\in \mathcal{V}$,
$\mathcal{G}_\mathbf{R}=\mathcal{G}_\mathbf{C}=\mathcal{G}$ and
\begin{equation*}
\begin{aligned}
&\mathbf{R}_{ij}=\left\{
\begin{aligned}
&\frac{1}{|\mathcal{N}_{\mathbf{R},i}^{\text{in}}|+1},\quad j\in \mathcal{N}_{\mathbf{R},i}^{\text{in}},\\
&1-\sum_{j\in\mathcal{N}_{\mathbf{R},i}^{\text{in}}}\mathbf{R}_{ij},\quad j=i,
\end{aligned}\right.
\\
&\mathbf{C}_{ji}=\left\{
\begin{aligned}
&\frac{1}{|\mathcal{N}_{\mathbf{C},i}^{\text{out}}|+1},\quad j\in\mathcal{N}_{\mathbf{C},i}^{\text{out}},\\
&1-\sum_{j\in\mathcal{N}_{\mathbf{C},i}^{\text{out}}}\mathbf{C}_{ji},\quad j=i,
\end{aligned}\right.
\end{aligned}
\end{equation*}
where $|\mathcal{N}_{\mathbf{R},i}^{\text{in}}|$ and $|\mathcal{N}_{\mathbf{C},i}^{\text{out}}|$ are the  cardinality of $\mathcal{N}_{\mathbf{R},i}^{\text{in}}$ and $\mathcal{N}_{\mathbf{C},i}^{\text{out}}$.

We run VRA-GT and the algorithms proposed in \cite{pu2020robust,wangGT2022} for 100 times and calculate the average as well as the variance of the optimization error $\sum_{j=1}^n \left\|x_{i,k}-x^*\right\|^2$ as a function of the iteration index $k$. We set  $\gamma=0.8$, $\beta_{k}=\frac{0.1}{1+k^{0.6}}$ and $\alpha_k=\frac{0.1}{1+k^{0.9}}$ for VRA-GT, $\gamma=0.5$, $\eta=0.01$ and $\alpha=0.01$ for the algorithms proposed in \cite{pu2020robust} (i.e. Robust push pull method),  $\gamma_k=\frac{1}{1+k^{0.7}}$ and $\lambda_k=\frac{1}{1+0.7k^{0.9}}$ for the methods proposed in \cite{wangGT2022}.  Their performance under Gaussian sharing-information noise with variance $\sigma^2_\xi=\sigma^2_\zeta=1,25,50$ are depicted in Figure \ref{fig-1}, where the solid curve, dot curve, dash-dot carve and dashed curve display the evaluation of VAR-GT method and the methods proposed in \cite{pu2020robust,wangGT2022} respectively.

In summary, the displayed algorithms in Figure \ref{fig-1} are all robust to the information-sharing noise with different variance (i.e. 1, 25, 50) and can converge to optimal solution with different accuracy. In the initial stage of iteration,  R-Push-Pull method has more faster convergence rate due to the stepsize and factors added on coupling weight being constant, especially for the noise with smaller variance as shown in Figure \ref{fig-1} (a). With the iterations increasing, VRA-GT and the proposed methods in \cite{wangGT2022} have preferable optimization accuracy than R-Push-Pull method for the noise with different variance levels, which can be attributed to the noise suppressing effect of decreasing factors. Obviously,  VRA-GT has the best convergence performance among the displayed methods in Figure \ref{fig-1} since VRA provides more accurate gradient-tracking result by reducing the gradient-estimation noise variance.

\section*{Acknowledgment}

The authors thank Professor Yongqiang Wang for the discussions on the proof of Theorem 2. The research is supported by National Key R$\&$D Program of China No. 2022YFA1004000, the NSFC \#11971090 and  Fundamental Research Funds for the Central Universities   DUT22LAB301.

%
\bibliographystyle{IEEEtran}
\bibliography{mybib}

\end{document}